\providecommand{\U}[1]{\protect\rule{.1in}{.1in}}
\newtheorem{theorem}{Theorem}
\newtheorem{condition}[theorem]{Condition}
\newtheorem{corollary}[theorem]{Corollary}
\newtheorem{definition}[theorem]{Definition}
\newtheorem{lemma}[theorem]{Lemma}
\newtheorem{proposition}[theorem]{Proposition}
\newtheorem{remark}[theorem]{Remark}
\newcommand{\red}[1]{{#1}}
\newcommand{\B}{\mathbb{B}}
\newcommand{\E}{\mathbb{E}}
\newcommand{\N}{\mathbb{N}}
	\renewcommand{\P}{\mathbb{P}}
\newcommand{\R}{\mathbb{R}}
\newcommand{\T}{\mathbb{T}}
\newcommand{\Z}{\mathbb{Z}}
\newcommand{\wt}{\widetilde}
\newcommand{\ep}{\epsilon}
\newcommand{\cN}{\mathcal{N}}
\newcommand{\cX}{\mathcal{X}}
\numberwithin{equation}{section}
\numberwithin{theorem}{section}
\theoremstyle{definition}
\begin{document}

\title[Scaling Limit of the Kuramoto Model on Random Geometric Graphs]{Scaling Limit of the Kuramoto Model  on Random Geometric Graphs}

\author[F. Cirelli]{Francisco Cirelli}
\address{Departamento de Matem\'atica\hfill\break \indent Facultad de Ciencias Exactas y Naturales\hfill\break \indent Universidad de Buenos Aires\hfill\break \indent Buenos Aires, Argentina}
\email{franciscocirelli@gmail.com}

\author[P. Groisman]{Pablo Groisman}
\address{Departamento de Matem\'atica\hfill\break \indent Facultad de Ciencias Exactas y Naturales\hfill\break \indent Universidad de Buenos Aires\hfill\break \indent IMAS-UBA-CONICET\hfill\break \indent Buenos Aires, Argentina}
\email{pgroisma@dm.uba.ar}

\author[R. Huang]{Ruojun Huang}
\address{Fachbereich Mathematik und Informatik\hfill\break \indent Universit\"at M\"unster \hfill\break \indent Einsteinstr. 62, M\"unster 48149, Germany.}
\email{ruojun.huang@uni-muenster.de}

\author[H. Vivas]{Hern\'an Vivas}
\address{Centro Marplatense de Investigaciones Matem\'aticas/Conicet}
\address{De\'an Funes 3350, 7600, Mar del Plata, Argentina}
\email{havivas@mdp.edu.ar}


\keywords{interacting dynamical systems; scaling limit; Kuramoto model; random geometric graphs}
\subjclass{Primary 34C15; Secondary 05C80, 34D06}

\begin{abstract}
We consider the Kuramoto model on a graph with nodes given by $n$ i.i.d.~points uniformly distributed on the $d$ dimensional torus. Two nodes are declared neighbors if they are at distance less than $\epsilon$. We prove a scaling limit for this model in compact time intervals as $n\to\infty$ and $\epsilon \to 0$ such that $\epsilon^{d+2}n/\log n \to \infty$. The limiting object is given by the heat equation. On the one hand this shows that the nonlinearity given by the sine function disappears under this scaling and on the other hand, provides evidence that stable equilibria of the Kuramoto model on these graphs are, as $n\to\infty$, in correspondence with those of the heat equation, which are explicit and given by twisted states. In view of this, we conjecture the existence of twisted stable equilibria with high probability as $n\to \infty$.
\end{abstract}
\date{\today}
\maketitle


\section{Introduction}\label{sec:intro}

Phase synchronization of systems of coupled oscillators is a phenomenon that has attracted the mathematical and scientific community for centuries, both because of its intrinsic mathematical interest \cite{chiba2016mean, medvedev2018continuum, bertini2014synchronization, coppini2020law} and since it appears in a wide range of physical and biological models \cite{mirollo1990synchronization, winfree1967biological, acebron2005kuramoto, bullo2020lectures, arenas2008synchronization, dorflerSurvey, strogatz2004sync, strogatz2000kuramoto}.

One of the most popular models for describing synchronization of a system of coupled oscillators is the Kuramoto model. Given a finite graph $G=(V,E)$ with $|V|=n$, the Kuramoto model determined by $G$ is the following ODE system 
\begin{equation}
\label{kuramoto.graph}
\frac{d}{dt}\theta_i(t)=\omega_i + \sum_{j\in V} w_{ij}\sin(\theta_j(t)-\theta_i(t)), \quad i \in V.
\end{equation}
Here $\theta_i\in[0,2\pi)$ represents the phase of the $i$-th oscillator, $\omega_i$ its natural frequency and the nonnegative weights $w_{ij}$, that verify $w_{ij}>0 \Leftrightarrow \{i,j\}\in E$, account for the strength of the coupling between two connected oscillators. Our focus in this article is on homogeneous Kuramoto models with $\omega_i=0$ for all $i$, related to phase synchronization.

With origins in the study of chemical reactions and the behavior of biological populations with oscillatory features \cite{kuramoto1975self,kuramoto1984chemical}, the Kuramoto model has proved to be applicable in the description of phenomena in areas as varied as neuroscience \cite{cumin2007generalising,breakspear2010generative}; superconductors theory \cite{wiesenfeld1996synchronization}; the beating rhythm of pacemaker cells in human hearts \cite{peskin1975mathematical} and the spontaneous flashing of populations of fireflies \cite{mirollo1990synchronization}. The reader is referred to  the surveys  \cite{dorfler2013synchronization,acebron2005kuramoto,rodrigues2016kuramoto, strogatz2000kuramoto} and the references therein for a more complete picture of the advances on the topic.

The model has been studied both by means of rigorous mathematical proofs and heuristic arguments and simulations in different families of graphs.

\subsection{Some background on scaling limits for the Kuramoto model}

A particularly interesting problem is to understand the behavior of the system as the size of the graph goes to infinity, usually referred as the {\em scaling limit}. This has been carried out for graphons \cite{medvedev2018continuum, Medvedev2014, MedvedevWgraphs, MedvedevSmallWorld}, Erd\H{os}-R\'enyi graphs \cite{medvedev2018continuum, ling2019landscape, kassabov2022global}, small-world and power-law graphs \cite{MedvedevSmallWorld, medvedev2018continuum}.

The Kuramoto model in random geometric graphs has been studied in \cite{abdalla2022guarantees,devita2024energy}. In \cite{abdalla2022guarantees} the authors are interested in the optimization landscape (i.e.~the study of local minima) of the energy function determined by \eqref{eq:km} rather than the scaling limit of the solution. They also work on a different regime: in their setting the graphs are constructed on the sphere $\mathbb S^{d-1}$ rather than in the torus and $d\to \infty$ as $n\to \infty$. In that context, they obtain guarantees for global spontaneous synchronization (i.e.~the global minimum $\theta_1=\theta_2 = \cdots = \theta_n$ is the unique local minimum of the energy). This is pretty different from our situation as we will see. In \cite{devita2024energy} the setting is similar to ours but restricted to dimension one. In that case the authors prove the existence of twisted states of arbitrary order as $n\to \infty$ with high probability.

From a different perspective, the scaling limit of the empirical measure (i.e.~considering the proportion of oscillators at each state instead of the state of each oscillator) has been largely studied, from the seminal phenomenological work of Ott and Antonsen \cite{OA1,OA2} to the rigorous mathematical proofs \cite{coppini2020law, coppini2022long, Oliveira1, Oliveira2, Oliveira3} among others.

A different type of scaling limit has been studied in \cite{GHV}. In that work the size of the graph is fixed but the connections are random and time-dependent (i.e., edges appear and disappear in a random way as time evolves). The authors obtain a deterministic behavior as the rate of change of the connections goes to infinity. This kind of scaling is usually called {\em averaging principle}.

The goal of this work is to study the scaling limit of \eqref{kuramoto.graph} in random geometric graphs as the size of the graph goes to infinity, which is not contained in all the previously mentioned works.

In our setting, the nodes of the graphs are contained in Euclidean space, and the neighboring structure is given by geometric considerations in such a way that the dimension of the space and the distribution of the points are crucial to determine the scaling limit of the model.

There is an extensive body of literature on continuum limits for PDEs and variational problems on random geometric graphs. We do not intend to mention all the work since it is huge. In the last ten years, there has been enormous progress in proving such limits both qualitatively and quantitatively, both for linear and nonlinear cases, both in Euclidean spaces and on manifolds. The convergence results are impressive. We mention just a few of them  \cite{thorpe2015gamma, trillos2016continuum, calder2019game, trillos2020error, armstrong2023quantitative, bungert2024ratio}, and refer the reader to the references therein for more. \red{Many of these works do not consider time-dependent equations, but we also note that parabolic problems have been studied in recent years, while the limit equations obtained there can be more complicated than ours, e.g.~evolutionary $p$-Laplace equation, see for example \cite{yuan2022continuum, esposito2021nonlocal, weihs2024discrete} and references therein. As stated before, we do not aim to cover all the bibliography here and we apologize for possible omissions that would certainly occur.}

\subsection{Proposed model and main results}\label{subsec:setting}

Given any positive integer $n$ and $\ep=\ep(n)\in(0,1)$, let $V=\{x_1,...,x_n\}$ be a set of $n$ independent and identically distributed (i.i.d.) uniform points in \red{$\Lambda_d:=[0,2\pi]^d\subset\R^d$. We also denote $\T^d=(\R/2\pi\Z)^d$, the $d$-dimensional flat-torus of side length $2\pi$ with periodic boundaries}. When $d=1$ we write $\T^1 = \mathbb T$. 

\red{Let $q:\mathbb{R}^d \to \mathbb{T}^d$ be the quotient map to the torus and $d_{\mathbb{T}^d}: \mathbb{T}^d \times \mathbb{T}^d \to \mathbb{R}_{\geq 0}$ be the usual flat-torus distance, i.e,
\[
d_{\T^d}(q(x_1,\ldots,x_d),q(y_1,\ldots,y_d)) = \sqrt{\sum_{i=1}^d \|x_i - y_i\|^2_{\mathbb{R}/2\pi\mathbb{Z}}}
\]
where $\|\cdot\|_{\mathbb{R}/2\pi\mathbb{Z}}$ is the distance to the closest integer multiple of $2\pi$.

\begin{remark}
\label{rmk:tor-dist}
For technical reasons, we will extend the point cloud $V$ on the box $\Lambda_d$ to a point cloud $\tilde V$ on the whole space $\R^d$ periodically. To do that, we first copy the points $V$ from $\Lambda_d=[0,2\pi]^d$ to every box of the form $\Pi_{\ell=1}^d[2\pi m_\ell, 2\pi (m_\ell+1)]$, for $m_\ell\in\Z$, and these boxes form a tiling of $\R^d$. Each point $x_i\in V$ in the original cloud has copies of the form $x_i+\sum_{\ell=1}^d2\pi m_\ell \vec e_\ell$, where $\{\vec e_\ell\}_{\ell=1}^d$ is the canonical basis of $\R^d$. Clearly, if $\tilde x_i$ is a copy of $x_i$, $q(\tilde x_i)=q(x_i)$.

Note that for each $x_i,x_j \in V$, there is a copy of $x_j$ in $\tilde V$, which we will call $x^i_j$, which is closest to $x_i$ among all other copies of $x_j$. An important fact to note is that $\|x_i - x^i_j\|_{\mathbb{R}^d} = d_{\mathbb{T}^d} (q(x_i),q(x_j))$.
\end{remark}
} 

From now on, we denote $|\cdot|=\|\cdot\|_{\R^d}$. We denote with $\sigma_d$ the volume of a unit ball in $\R^d$, and let $K:\R^d\to\R_{\ge0}$ be a given non-negative, {\it{radially symmetric}} function with compact support in the closure of the unit ball $\B(0,1)$ such that $K(z)>0$ for every $z\in\B(0,1)$ and $$\int_{\R^d} K(z)dz=1.$$ 
We denote the finite constants
\begin{align}\label{kappa-2}
\kappa_i:= \frac{1}{\sigma_d}\int_{\R^d}|z|^iK(z)\, dz, \quad i=1,2.
\end{align}


\red{We consider the homogeneous Kuramoto model on a weighted graph $G=(V,E)$, where $V=\{x_1,...,x_n\}$ and $\{x_i,x_j\}\in E \Leftrightarrow d_{\mathbb{T}^d}(q(x_i),q(x_j) = |x_i-x^i_j|<\epsilon$. Thus, two points are connected if and only if they are close when seen as points on the torus. The weights are given by $w_{ij}=K(\ep^{-1}(x^i_j-x_i))$. Note now, since $K$ is radially symmetric, the weight is solely determined by the distance of the points in the torus. Let $u^{n}\colon [0,\infty)\times V\to\R$ be the unique solution to a system of $n$ 
 Kuramoto equations 
\begin{align}
\begin{cases}
\displaystyle{\frac{d}{dt}}u^{n}(t,x_i)=  \displaystyle{\frac{1}{\ep^2N_i}}\sum_{j=1}^n\sin \left(u^{n}(t,x_j)-u^{n}(t,x_i)\right)K\left(\ep^{-1}(x^i_j-x_i)\right), \label{eq:km}\\[10pt]
u^{n}(0,x_i)=u^n_0(x_i), \quad\quad  i=1,2,...,n.
\end{cases}
\end{align}
The random integer $N_i=|\cN(i)|$ denotes the cardinality of the set of neighbors of $x_i$, that we call
$$
\cN(i) :=  \{j \colon j\neq i,\, d_{\mathbb{T}^d}(q(x_i),q(x_j))=|x^i_j-x_i| < \ep\} \subset\{1,2,...,n\}\backslash\{i\}.
$$
}
Our assumptions on the kernel $K$ include the canonical choice of the indicator function on the unit ball (normalized to have integral $1$), in which case we get constant weight for points at distance smaller than $\epsilon$ and zero otherwise. \red{The Kuramoto model can be also interpreted as taking values in the unit circle $\mathbb S^1\cong\T$, where each $u^n(t,x_i)$ represents the phase of an oscillator, and this is done by  performing $u^n$ mod $2\pi$.}

\red{
\begin{remark}\label{rmk:km-extension}
    We extend the initial condition $u_0^n$ from $V$ to $\tilde V$ in the following ``pseudo-periodic" way: we will choose some $\{\tilde k_\ell\}_{\ell=1}^d\in\Z^d$ such that if $y\in\tilde V$ satisfies $y-x_i = \sum_{\ell=1}^d 2\pi m_\ell\vec e_\ell$ for some $x_i\in V$ and $\{m_\ell\}_{\ell=1}^d\in\Z^d$, then we set 
    \[
    u_0^n(y)= u_0^n(x_i)+\sum_{\ell=1}^d2\pi \tilde k_\ell m_\ell.
    \].

  Similarly, we extend $u^n$ to $\tilde V$ via
    
    \begin{align}\label{extended-ode}
    u^n(t,y):= u^n(t,x_i)+\sum_{\ell=1}^d2\pi \tilde k_\ell m_\ell, \quad \text{for all }t\ge0.
    \end{align}
    Note that, with this extension, for $x_i \in V$, 
    \begin{align}\label{eq:ext-kur}
        \sum_{j=1}^n\sin \left(u^{n}(t,x_j)-u^{n}(t,x_i)\right)K\left(\ep^{-1}(x^i_j-x_i)\right) &= \sum_{j=1}^n\sin \left(u^{n}(t,x^i_j)-u^{n}(t,x_i)\right)K\left(\ep^{-1}(x^i_j-x_i)\right)\nonumber\\ 
        &= \sum_{y \in \tilde V}\sin \left(u^{n}(t,y)-u^{n}(t,x_i)\right)K\left(\ep^{-1}(y-x_i)\right)
    \end{align}
    where the last equality stems from the fact that $K$ is supported in $\mathbb{B}(0,1)$ and at most one copy of each $x_j$ is within $\ep$-distance of $x_i$. This shows that our extension to $u^n$ can be thought of as a solution to the Kuramoto model on the point cloud $\tilde V$, where points $x,y$ are connected if they are $\ep$-close with weight $K(\ep^{-1} (x_i - x_j))$.

\end{remark}
Throughout the paper we use the notation
\begin{align}\label{def:mod}
[y]:=y \text{ mod }2\pi \quad\in\Lambda_d,
\end{align}
the modulo being performed component-wise, for any $y\in\R^d$. }

For any fixed $n$ and realization of the random points $V=\{x_1,...,x_n\}$, \eqref{eq:km} is a finite system of ODEs $(u^1,...,u^n)$ with Lipschitz coefficients, hence existence and uniqueness of solution is classical.

To build intuition on the scaling limit behavior of \eqref{eq:km} as $\ep\to0$ and $n\to\infty$, it is helpful to notice that two neighboring points $x_i,x_j$ in our geometric graph are at distance at most $\ep$, as enforced by the kernel $K$, hence if $u^n$ is suitably regular, the argument of the sine function is also very close to $0$. By Taylor expansion at $0$, $\sin t \approx t$, thus the nonlinear operator in \eqref{eq:km} is close to the discrete Laplacian. In fact,
the sine function in \eqref{eq:km} can be replaced by an odd $2\pi$-periodic smooth function $J$ with Taylor expansion $J(x)= J'(0)x + o(x^2)$, with $J(0)=0, J'(0) > 0$. In that case it can be assumed without loss of generality that $J' (0)=1$.

We are interested in the Kuramoto model in graphs with this structure, on the one hand since they are ubiquitous when modeling interacting oscillators with spatial structure, and on the other hand because they form a large family of model networks with persistent behavior (robust to small perturbations) for which we expect to have twisted states as stable equilibria. For us, a twisted state is an equilibrium solution of \eqref{eq:km} for which the vector $(\tilde k_1, \dots, \tilde k_d)$ defined in \eqref{extended-ode} below is not null.

We remark that spatial structure and local interactions have been shown to be crucial for the emergence of patterns in this kind of synchronized systems for chemical reactions \cite{winfree1984organizing}, behavior of pacemaker cells in human hearts \cite{peskin1975mathematical} and the spontaneous flashing of populations of fireflies \cite{mirollo1990synchronization}.

Twisted states have been identified in particular classes of graphs as explicit particular equilibria of \eqref{eq:km}. They have been shown to be stable equilibria in rings in which each node is connected to its $k$ nearest neighbors on each side \cite{wiley2006size}, in Cayley graphs and in random graphs with a particular structure \cite{MedvedevStability}. In all these cases, graph symmetries (which are lacking in our model) are exploited to obtain the twisted states. Patterns in the the Kuramoto model are also reported in \cite{chiba2022instability}.

For $k \in \Z$ and $1 \le \ell \le d$ we call $\vec e_\ell$ the $\ell$-th canonical vector and consider the functions $u_{k,\ell}\colon \T^d \to \T$ given by,
\begin{align}\label{cont-twist}
u_{k,\ell}(x)=kx\cdot \vec e_\ell  \quad \text{mod } 2\pi.
\end{align}

We are thinking of twisted states as stable equilibria that are close in some sense to (or that can be put in correspondence with) the functions $u_{k,\ell}$. \red{We remark that the functions $u_{k,\ell}$ are defined mod $2\pi$. As a consequence, these functions are stable equilibrium solutions of the heat equation} \red{for $u:\T^d\to\T$}
\begin{equation}
\label{eq:heat}
\left\{
\begin{array}{rcl}
\displaystyle{\frac{d}{dt}} u(t,x) & = & \displaystyle{\frac{\kappa_2}{2d}}\Delta u(t,x),\\[10pt]
u(0,x) & = & u_0(x),
\end{array}
\right.
\end{equation}
see \cite{wiley2006size, MedvedevContTwist, MedvedevStability, MedvedevSmallWorld}\footnote{Even if these references do not deal with the heat equation but rather with different versions of the Kuramoto model, the stability analysis for \eqref{eq:heat} is similar and even simpler since it is a linear equation and the Fourier series can be explicitly computed.}. We will call them {\em continuous twisted states}; these are all the equilibiria of \eqref{eq:heat}. \red{Since PDEs taking values in $\T$ are not standard, we need to correctly interpret such an equation. In this paper, by saying $u:[0,+\infty)\times\T^d\to\T$ solves the heat equation \eqref{eq:heat} we mean its lift as a function from $\R^d$ to $\R$ solves the usual heat equation, whose initial condition $\tilde u_0$ is the lift of $u_0$. This is done in the definition and lemma below.}


\red{
\begin{definition}\label{def:pseudo}
    We say that a function $\tilde f:\R^d\to\R$ is {\it{pseudo-periodic}} (of period $2\pi$), if there exist $k_\ell\in\Z$, $\ell=1,2,...,d$, such that 
    \begin{align}\label{eq:pseudo-periodic}
    \tilde f(x+2\pi \vec e_\ell)= \tilde f(x)+ 2\pi k_\ell
    \end{align}
    holds for any $x\in\R^d$. We call $k_\ell\in\Z$, $\ell=1,2,...,d$ the winding numbers of $\tilde f$.
\end{definition}

\begin{lemma}\label{lem:winding}
    To any continuous function $f:\T^d\to \mathbb T$ corresponds a pseudo-periodic continuous function $\tilde f:\R^d\to\R$, whose winding numbers are determined by $f$ and is  unique up to a global shift of an integer multiple of $2\pi$. We call $\tilde f$ the lift of $f$. If $f:[0,\infty)\times\T^d\to\mathbb T$ further depends on time and is continuous in $(t,x)$, then it has a continuous-in-$(t,x)$ lift $\tilde f(t,x)$ whose winding numbers are constant in $t$ and determined by $f|_{t=0}$. Conversely, for any pseudo-periodic continuous $\tilde f:\mathbb{R}^d \to \mathbb{R}$ corresponds a continuous $f:\mathbb{T}^d \to \mathbb{T}$. Similarly, for any pseudo-periodic in space, continuous $\tilde f:[0,\infty) \times \mathbb{R}^d \to \mathbb{R}$ corresponds a continuous $f:[0,\infty) \times \mathbb{T}^d \to \mathbb{T}$. 
\end{lemma}
\begin{proof}

Let $p:\mathbb{R} \to \mathbb{T} = \mathbb S^1$ be the standard universal cover of $\mathbb S^1$, i.e, $p(x) = e^{\mathbf ix}$ where $\mathbf i$ is the complex unit. Given a continuous $f:\mathbb{T}^d \to \mathbb{T}$, $f\circ q:\mathbb{R}^d \to \mathbb{T}$. Therefore, since $\mathbb{R}^d$ is simply connected, given $e_0 \in p^{-1}(f \circ q(0)) = \{e_0  + 2k\pi : k \in \mathbb{Z}\}$, there exists a unique continuous $\tilde f:\mathbb{R}^d \to \mathbb{R}$ such that $p \circ \tilde f = f \circ q$ and $\tilde f(0) = e_0$. 

Since $p(x) = p(y)$ if and only if $x-y$ is an integer multiple of $2\pi$ and $p\circ \tilde f(x+2\pi \vec e_\ell) = f \circ q(x+2\pi \vec e_\ell) = f \circ q(x) =p \circ \tilde f(x)$, for each $x$ there exist $k_\ell$ such that $\tilde f(x + 2\pi\vec e_\ell) = \tilde f(x) + 2\pi k_\ell$. However, since $\tilde f$ is continuous, these integers must be constant for all $x$.

Given any other $\tilde g:\mathbb{R}^d \to \mathbb{R}$ such that $p \circ \tilde g = f \circ q$, note that $\tilde g(0) \in p^{-1}(f \circ q(0))$ and so $\tilde f(0) = \tilde g(0) + 2k\pi$ for some integer $k$. Therefore, since $p \circ \tilde f = p \circ \tilde g = p \circ (\tilde g + 2k\pi)$, by the uniqueness of the lift, $\tilde f = \tilde g + 2k\pi$, showing that they differ by an integer multiple of $2\pi$. Moreover, noting that the winding numbers $k_{\ell} = \frac{\tilde f(2\pi \vec e_\ell) - \tilde f(0)}{2\pi} = \frac{\tilde g(2\pi \vec e_\ell) - \tilde g(0)}{2\pi}$, we see that the winding numbers depend solely on $f$.

For the converse result, given a continuous pseudo-periodic $\tilde f:\mathbb{R}^d \to \mathbb{R}$, note that $f :\mathbb{T}^d \to \mathbb{T}$ given by $f(q(x)) := p(f(x))$ is well defined and since $q$ is a quotient map, $f$ is continuous.

Utilizing the fact that $\red{1_{[0,\infty)} \times p:[0,\infty) \times \mathbb{R}^d \to [0,\infty) \times \mathbb{T}^d}$ is a universal covering of $[0,\infty) \times \mathbb{T}^d$ as well as the fact that the winding numbers are time independent due to continuity and analogous reasoning, we obtain the equivalent results for the correspondence in $[0,\infty) \times \mathbb{T}^d$.
\end{proof}



This correspondence allows us to interpret functions $f:\mathbb{T}^d \to \mathbb{T}$ and pseudo-periodic $\tilde f:\mathbb{R}^d \to \mathbb{R}$ as one and the same. In particular, the lift of the continuous twisted states \eqref{cont-twist} are just the linear functions $kx\cdot \vec e_\ell$. In view of Lemma \ref{lem:winding}, when we speak of solutions to \eqref{eq:heat} we mean their lifts. }

\begin{remark}\label{rmk:extension}

Both the solutions of \eqref{eq:km} and \eqref{eq:heat} are shift-invariant. Hence, we can assume without loss of generality (and we will do so) that the average of the initial condition is zero (i.e. $\sum_{i=1}^n u_0(x_i)=0$ in \eqref{eq:km} and $\int_{\Lambda_d} \tilde u_0(x) \, dx = 0$ in \eqref{eq:heat}). This is preserved for all times. When referring to the dynamics of both equations (basins of attraction, asymptotical stability etc.), we are implicitly assuming that the phase space for the dynamics is given by this restriction: the orthogonal space of $(1, \dots, 1)$ for \eqref{eq:km} and functions with zero-average --- orthogonal to the constants --- for \eqref{eq:heat}. \red{This is standard when working with the Kuramoto model and we refer the reader to \cite[Chapter 17]{bullo2020lectures} for more details.}
\end{remark}

Twisted states are expected to be robust and persistent in several contexts (since they can be observed in nature), but situations in which they can be computed explicitly (and hence proving their existence) are not. In our model, we expect to have twisted states as a generic property, i.e.~we expect them to exist with high probability as $n\to\infty$ and to persist if one adds or removes one or a finite number of points; or if one applies small perturbations to the points in a generic way.

Although the existence of such steady-states cannot be deduced directly from our arguments, we think that our results provide evidence of their ubiquity as a robust phenomenon.

Twisted states and their stability have also been studied in small-world networks \cite{MedvedevSmallWorld} and in the continuum limit \cite{MedvedevContTwist} among others.

We will work under the following assumption.
\begin{condition}\label{assump-eps}
$\ep\to0$ as $n\to\infty$ and 
\[
\liminf_{n\to\infty}\frac{\ep^{d+2}n}{\log n}=\infty.
\]
\end{condition}
This is contained in what is usually called the {\em sparse} regime in the synchronization community\footnote{It is curious that from the point of view of synchronization, this regime can be thought of as sparse, since the degree of each node is of a smaller order than the number of nodes in the graph. However, from the point of view of random geometric graphs, this regime is not {\em sparse} but the opposite, since we are in a supercritical regime from the point of view of connectivity.}.  It is worth to note that Condition \ref{assump-eps} is the threshold for the pointwise convergence of the graph Laplacian (see the discussion in the introduction of \cite{trillos2020error} and \cite[Section 2.2]{armstrong2023quantitative}). We think this is not the optimal rate to obtain our results. We expect them to hold up to the rate
\begin{align*}
\liminf_{n\to\infty}\frac{\ep^{d}n}{\log n}=\infty,
\end{align*}
which is (up to logarithmic factors) the connectivity threshold and also guarantees that the degree of each node goes to infinity faster than $\log n$ \cite{penrose1995single}.

If $\ep \to 0$ at an even smaller rate, different behaviors are expected depending on the rate of convergence. It is a very interesting problem to obtain such behaviors. This kind of situations have previously appeared in the aforementioned references on continuum limit of variational problems in random geometric graphs. See for example \cite{trillos2020error}.

Note that since $n\epsilon^d \to \infty$, we have
\[
\P\left(N_i=\frac{\sigma_d\ep^dn}{(2\pi)^d}\left(1+o(1)\right),\,  i=1,2,...,n\right)\to 1.
\]
In fact, by Bernstein's inequality and union bound, we have that 
\begin{align}\label{bernstein}
\P\left(\sup_{x_i\in V}\left|N_i-\frac{\sigma_d\ep^dn}{(2\pi)^d}\right|>\lambda\right)\le 2n \exp\left\{-\frac{2\lambda^2}{\frac{\sigma_d \ep^d n}{(2\pi)^d}+\frac{\lambda}{3}}\right\},\quad \lambda>0,
\end{align}
which implies the previous statement.

By means of this identification, we can compare solutions of \eqref{eq:km} with solutions of \eqref{eq:heat} and that is the purpose of our main theorem below. Denote $\|u^n_0 - u_0\|_{L^\infty(V)} = \sup_{x_i\in V}|u^n_0(x_i) - u_0(x_i)|$.

\begin{theorem}\label{thm.main}
Let $T>0$ be fixed, $u^n: [0,T]\times V\to\R$ be the unique solution of \eqref{eq:km} with initial condition $u_0^n:V\to\R$, and $u:[0,T]\times\R^d\to\R$ the unique \red{pseudo-periodic} solution of \eqref{eq:heat} with \red{pseudo-periodic} initial condition $\tilde u_0\in C^{2,\alpha}(\R^d, \R)$ for some $\alpha\in(0,1)$. Assume Condition \ref{assump-eps} holds and $$\sum_{n=1}^ \infty  \P\left(\|u^n_0 - u_0\|_{L^\infty(V)} > \delta \right)<\infty$$ for every $\delta>0$. Then,
\begin{equation*}
\lim_{n\to \infty }\sup_{x_i\in V,\, t\in[0,T]} |u^n(t,x_i) - u(t,x_i)| = 0, \quad \text{almost surely.}
\end{equation*}
\end{theorem}

\begin{proof}
The proof is a consequence of Proposition \ref{prop.convergence} and Proposition \ref{ppn:kur-to-int} below.
\end{proof}

\begin{remark}
We can give a quantitative bound on the difference between Kuramoto solution and the heat equation. This is because most parts of our proof are non-asymptotic. Fixing $T$ finite and \red{pseudo-periodic $\tilde u_0\in C^{2,\alpha}(\R^d, \R)$} for some $\alpha\in(0,1)$,  there exist some finite constants $C_1=C_1(d,T)$, $C_2=C_2(\alpha, \tilde u_0, T)$ and $\ep_0\in(0,1)$, such that for any $\ep\in(0,\ep_0)$, $n\in\N$ and $\delta\in(0,1)$, we have
\begin{align*}
\P\biggl(\sup_{x_i\in V,\, t\in[0,T]}|u^n(t, x_i) - u(t, x_i)|>\delta+C_2\ep^\alpha\biggr)\le C_1n^{3}e^{-C_1^{-1}\ep^{d+2} n \delta^2}+2\P\left(\|u^n_0 - u_0\|_{L^\infty(V)} > \delta/4 \right).
\end{align*}
This bound can be seen by combining Proposition \ref{prop.convergence} and \eqref{summable}. 
\end{remark}

As a consequence we obtain evidence of the existence of patterns. In particular, we prove that the system \eqref{eq:km} remains close to a continuous twisted state for times as large as we want by taking $n$ large enough (depending on the time interval and with high probability).

\begin{corollary}\label{cor}
Fix any $k\in\Z$ and $1\le\ell\le d$. Let $\mathcal A_{k,\ell}$ denote the domain of attraction, with respect to $L^\infty$-norm (induced by geodesic distance on the circle), of the continuous twisted state $u_{k,\ell}$ for the heat equation \eqref{eq:heat}, and consider any $u_0\in\mathcal A_{k,\ell}$. For any $\eta>0$ there exists $T_0=T_0(\eta, u_0)<\infty$ such that for any $T\ge T_0$, the solution $u^n$ of the Kuramoto equation with initial condition $u_0$ satisfies $$\limsup_{n\to\infty}\left\{\sup_{x_i\in V}|u^n(T,x_i)-u_{k,\ell}(x_i)|\right\}<\eta, \quad a.s.$$
\end{corollary}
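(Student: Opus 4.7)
The plan is to combine the scaling limit of Theorem \ref{thm.main} with the large-time behavior of the linear heat equation \eqref{eq:heat} via a triangle inequality argument, so the proof reduces to two independent pieces.

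First I would use the hypothesis $u_0 \in \mathcal A_{k,\ell}$: by definition of the $L^\infty$-basin of attraction for \eqref{eq:heat}, there exists $T_0 = T_0(\eta, u_0) < \infty$ such that
\[
\sup_{t \ge T_0}\, \|u(t,\cdot) - u_{k,\ell}\|_{L^\infty(\T^d)} < \eta/2.
\]
This is a purely PDE-side statement, and in fact a completely explicit one: the heat equation being linear, the difference $u(t,\cdot)-u_{k,\ell}$ lifts to a genuinely periodic $\R$-valued function on $\T^d$ (the winding numbers of $u_0$ and of $u_{k,\ell}$ agree because $u_0$ lies in $\mathcal A_{k,\ell}$), and its non-constant Fourier modes decay exponentially while its mean is constant in time.

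Next I would fix any $T \ge T_0$ and take as initial data for the Kuramoto system the restriction $u^n_0(x_i) := u_0(x_i)$, so that $\|u^n_0 - u_0\|_\infty = 0$ and the summability hypothesis in Theorem \ref{thm.main} is trivially satisfied. With $u_0$ Lipschitz, as required to apply the theorem, Theorem \ref{thm.main} gives
\[
\lim_{n\to\infty}\, \sup_{t\in[0,T]}\,\sup_{i=1}^n |u^n(t,x_i) - u(t,x_i)| = 0, \quad \text{a.s.}
\]
In particular this holds at $t = T$.

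Finally, applying the triangle inequality at $t = T$,
\[
\sup_{i=1}^n |u^n(T,x_i) - u_{k,\ell}(x_i)| \le \sup_{i=1}^n |u^n(T,x_i) - u(T,x_i)| + \sup_{x\in\T^d}|u(T,x) - u_{k,\ell}(x)|,
\]
and taking $\limsup_{n\to\infty}$, the first summand vanishes almost surely by the scaling limit, while the second is less than $\eta/2$ by the choice of $T_0$. Hence $\limsup_n \sup_i |u^n(T,x_i) - u_{k,\ell}(x_i)| \le \eta/2 < \eta$ a.s., as claimed. I do not anticipate any serious obstacle: all the analytic difficulty is absorbed in Theorem \ref{thm.main}, and the only mild subtlety is to estimate the heat-equation error in the continuous $L^\infty$-norm on $\T^d$ (which is where the domain-of-attraction hypothesis lives) and then bound the discrete supremum over $\{x_i\}$ by this continuous one.
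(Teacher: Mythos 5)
Your proposal is correct and follows the same route as the paper: invoke $u_0\in\mathcal A_{k,\ell}$ to choose $T_0$ so the heat solution is $L^\infty$-close to $u_{k,\ell}$ for $t\ge T_0$, then apply Theorem \ref{thm.main} and the triangle inequality at $t=T$. The only (harmless) additions are the explicit choice $u^n_0=u_0|_V$ to discharge the summability hypothesis and the Fourier-decay aside, neither of which changes the argument.
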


\begin{proof}
Since $u_0\in\mathcal A_{k,\ell}$, the solution of the heat equation $\bar u(t)= u(t, \cdot)$ with initial condition $u_0$ converges as $t\to\infty$ to the continuous twisted state $u_{k,\ell}$, in $L^\infty(\mathbb T^d, \T)$. Given any $\eta>0$, we can choose $T_0=T_0(\eta, u_0)$ large enough such that $\|\bar u(T)-u_{k,\ell}\|_{L^\infty(V)}<\eta$ for all $T\ge T_0$. By Theorem \ref{thm.main}, for any $T\ge T_0$ and solution of the Kuramoto equation $u^n$ starting at $u_0$, we have that
\begin{align*}
&\limsup_{n\to\infty}\left\{\sup_{x_i\in V}|u^n(T,x_i)-u_{k,\ell}(x_i)|\right\}\\
&\le \limsup_{n\to\infty}\sup_{t\in[0,T]}\left\{\sup_{x_i\in V}|u^n(t,x_i)-\bar u(t, x_i)|\right\}+\|\bar u(T)-u_{k,\ell}\|_{L^\infty(V)}<\eta,
\end{align*}
almost surely.
\end{proof}
We conjecture that with high probability (as $n\to\infty$) for each $k,\ell$ there is a stable equilibrium of \eqref{eq:km} which is close to $u_{k,\ell}$. We are not able to prove this, but Theorem \ref{thm.main} can be seen as evidence to support this conjecture. This conjecture has been proved in dimension $d=1$ in \cite{devita2024energy}.


Our strategy of proof is similar in spirit to that in \cite{medvedev2018continuum} in the sense that we also consider an intermediate equation which is deterministic and behaves like solutions of \eqref{eq:km} on average (see \eqref{eq.integral}). Then we compare this intermediate solution on the one hand with the solution of \eqref{eq:km} (i.e we show that random solutions are close to their averaged equation) and on the other hand with the limiting heat equation to conclude the proof. Our averaging procedure and the way in which these two steps are carried differ largely from \cite{medvedev2018continuum}. For example, the average in \cite{medvedev2018continuum} is respect to the randomness (i.e.~taking expectation) while ours is in space. In this sense, we are closer to \cite{calder2019game, trillos2020error, trillos2016continuum}.

\red{
\begin{remark}
    From a scaling limits point of view (and moving away from the Kuramoto context), it may seem a bit ``trivial" that our limit equation is the heat equation, which either could be read as good news or bad news. On the one hand, obtaining such a simple equation helps to easily identify the behavior of the system in this scale. On the other hand, one could aim for more interesting objects in the limit. One way to modify our problem so that the sinusoidal nonlinearity persists in the limit is to replace in \eqref{eq:km} the nonlinearity $\sin \left(u^{n}(t,x_j)-u^{n}(t,x_i)\right)$ with $\sin \left(\ep^{-1}\left(u^{n}(t,x_j)-u^{n}(t,x_i)\right)\right)$. Under this scaling, one can no longer linearize the sine function because its argument $\ep^{-1}\left(u^{n}(t,x_j)-u^{n}(t,x_i)\right)$ is not small (it is of order $1$ if $u^n$ is smooth). Identifying the limit equation, particularly the form of the differential operator, appears to be an intriguing and nontrivial problem \footnote{We thank an anonymous referee for prompting this remark.} Another instance in which the nonlinearity (as given in \eqref{eq:km}) may not vanish and appears to be interesting from the point of view of understanding Kuramoto dynamics, is when the initial conditions do not converge to a smooth function. A prototypical example would be to consider i.i.d. uniform random variables as initial condition. Understanding this situation is important for the study of global synchronization versus existence of patterns (see the discussion in the introduction of \cite{DeVita2025})
\end{remark}}

The paper is organized as follows: in Section \ref{sec.integral} we prove the necessary results for the intermediate equation \eqref{eq.integral}, namely existence and uniqueness of solutions, a comparison principle, a uniform Lipschitz estimate and uniform convergence of solutions to solutions of the heat equation; in Section \ref{sec.microtoint} we prove the uniform convergence of solutions of the microscopic model to solutions of the integral equation \eqref{eq.integral} almost surely, which concludes the proof of Theorem \ref{thm.main}; {in Section \ref{sec.simulations} we discuss the relation of our results with the possible existence of twisted states. We also show some simulations to support our conjecture (in addition to our Corollary \ref{cor}) and to illustrate the main result.}

\section{The integral equation}\label{sec.integral}

In this section, we focus on \red{finding a pseudo-periodic function $u^{I,\ep}:[0,\infty)\times\R^d\to\R$  that uniquely solves} the auxiliary integral equation 
\begin{eqnarray}\label{eq.integral}
\begin{cases}
\displaystyle{\frac{d}{dt}}u^{I,\ep}(t,x)=\frac{1}{\sigma_d\ep^{d+2}}\int_{\red{\R^d}}\sin\left(u^{I,\ep}(t,y)-u^{I,\ep}(t,x)\right)K\Big(\frac{y-x}{\ep}\Big)dy, \label{eq:int}\\[10pt]
u^{I,\ep}|_{t=0}=\red{\tilde u_0,}
\end{cases}
\end{eqnarray}
where $K$ is defined as in Section \ref{subsec:setting} and \red{$\tilde u_0$ is the lift of some $u_0:\T^d\to\T$ hence pseudo-periodic.} We remark that while $u^{I,\ep}$ depends on $\ep$, \red{throughout this whole section $\ep$ is fixed}. 
We will prove:
\begin{enumerate}
\item existence and uniqueness of solutions for every fixed $\ep$ and \red{pseudo-periodic $\tilde u_0\in C^1(\R^d,\R)$};
\item comparison principles;
\item a spatial Lipschitz estimate {\it{uniform in $\epsilon$}} (assuming \red{$\tilde u_0\in C^1(\R^d,\R)$});
\item uniform convergence of solutions of \eqref{eq.integral} to solutions of the heat equation \eqref{eq:heat} as $\ep \to 0$ (assuming \red{$\tilde u_0\in C^{2,\alpha}(\R^d,\R)$} for some $\alpha\in(0,1)$).
\end{enumerate}

\subsection{Existence}

The following proposition gives existence and uniqueness of solutions to \eqref{eq.integral}. 
We will follow a fixed point procedure; let us integrate \eqref{eq:int} with respect to time to get
\begin{equation}\label{eq.fixedpoint}
u^{I,\ep}(t,x) =\tilde u_0(x) + \frac{1}{\sigma_d\ep^{d+2}}\int_0^t\int_{\red{\R^d}}\sin\left(u^{I,\ep}(s,y) - u^{I,\ep}(s,x)\right)K\Big(\frac{y-x}{\ep}\Big)dy\:ds.
\end{equation}
We see that finding a solution of the integral equation \eqref{eq:int} is equivalent to finding \red{a pseudo-periodic $u^{I,\ep}:[0,\infty)\times\R^d\to\R$ with}
\[
\red{u^{I,\ep}\in C\big([0,\infty);C^1(\R^d, \R)\big)}
\]
satisfying \eqref{eq.fixedpoint}. 

\begin{lemma}\label{lem:derivative}
Let $F(x):=\int_{\red{\R^d}}g(x,y)\mathsf K(y-x)dy$, where 
$g(x,y)\in C^1_b(\R^d\times\R^d,\R)$ \red{(with bounded $C^1$-norm)} and $\mathsf K\in L^1(\R^d,\R)$ with compact support, then the following derivative formula holds:
\begin{align*}
\nabla F(x)=\int_{\red{\R^d}}\left[\nabla_x g(x,y)+\nabla_yg(x,y) \right] \mathsf K(y-x) dy.
\end{align*}
\end{lemma}

\begin{proof}
Consider a sequence of $C_c^\infty(\R^d)$ functions $\{\mathsf K_n: \R^d\to\R\}_{n\in\N}$ such that $\mathsf K_n\to\mathsf K$ in $L^1(\R^d)$. Let 
\begin{align}\label{approx-fct}
F_n(x):=\int_{\R^d}g(x,y)\mathsf K_n(y-x)dy
\end{align}
and clearly we have that 
\begin{align}
\nabla F_n(x)&=\int_{\R^d}\left[\nabla_x g(x,y)\mathsf K_n(y-x)+g(x,y)\nabla_x (\mathsf K_n(y-x))\right] dy  \nonumber\\
&=\int_{\R^d}\left[\nabla_x g(x,y)\mathsf K_n(y-x)-g(x,y)\nabla_y (\mathsf K_n(y-x))\right] dy   \nonumber\\
&=\int_{\R^d}\left[\nabla_x g(x,y)+\nabla_yg(x,y) \right] \mathsf K_n(y-x) dy,   \label{formula-derivative}
\end{align}
where the last step is due to integration by parts, \red{and due to $\mathsf K_n$ being compactly supported, there are no boundary terms}. Since $g\in C^1_b$ (with bounded $C^1$-norm), we have for any $m,n\in\N$ that 
\begin{align*}
\sup_{x\in\R^d}|\nabla F_n(x)-\nabla F_m(x)|\le 2\|g\|_{C^1(\R^{2d})}\|\mathsf K_n- \mathsf K_m\|_{L^1(\R^d)}.
\end{align*}
We also have that 
\begin{align*}
\sup_{x\in\R^d}|F_n(x)-F_m(x)|\le \|g\|_{L^\infty(\R^{2d})}\|\mathsf K_n- \mathsf K_m\|_{L^1(\R^d)}.
\end{align*}
In other words,
\begin{align*}
\|F_n-F_m\|_{C^1(\R^d)}\le 3\|g\|_{C^1(\R^{2d})}\|\mathsf K_n-\mathsf K_m\|_{L^1(\R^d)},
\end{align*}
where the $C^1$-norm of $g$ is defined as usual  $\|g\|_{C^1}:=\|g\|_{L^\infty}+\|\nabla_x g\|_{L^\infty}+\|\nabla_yg\|_{L^\infty}$. 

Since $\{\mathsf K_n\}_{n\in\N}$ is Cauchy in $L^1(\R^d)$, we have that $\{F_n\}_{n\in\N}$ is Cauchy in $C^1(\R^d)$ and we call $\lim_{n\to\infty}F_n=:\wt F$ the limit. Further, since $\mathsf K$ is the $L^1$-limit of $\mathsf K_n$, passing to $n\to\infty$ on both sides of \eqref{approx-fct} and comparing with the formula of $F$, we see that $\wt F=F$. Then passing to $n\to\infty$ on both sides of \eqref{formula-derivative} we see that the claimed formula for $\nabla F$ holds.
\end{proof}

\red{
\begin{lemma}\label{lem:periodic}
Suppose that $u(t,x):[0,\infty)\times\R^d\to\R$ is a continuous function in $(t,x)$, pseudo-periodic in $x$ for every $t$, then the function
\[
U(t,x):=\int_{\R^d}\sin\left(u(t,y) - u(t,x)\right)K\Big(\frac{y-x}{\ep}\Big)dy
\]
is $2\pi$-periodic in $x$ in each coordinate direction, for every $t$.
\end{lemma}
\begin{proof}
    Let $\{k_\ell\}_{\ell=1}^d \in\Z^d$ denote the winding numbers of $u(t,\cdot)$ for every $t\ge0$ (which by Lemma \ref{lem:winding} are time-independent and determined by $u|_{t=0}$). We have, for every $\ell=1,2,...,d$ and $x\in\R^d$,
    \begin{align*}
       U(t,x+2\pi \vec e_\ell) &=\int_{\R^d}\sin\left(u(t,y) - u(t,x+2\pi \vec e_\ell)\right)K\Big(\frac{y-x-2\pi \vec e_\ell}{\ep}\Big)dy\\
        &=\int_{\R^d}\sin\left(u(t,y'+2\pi \vec e_\ell) - u(t,x+2\pi \vec e_\ell)\right)K\Big(\frac{y'-x}{\ep}\Big)dy'\\
        &=\int_{\R^d}\sin\left(u(t,y')+2\pi k_\ell - u(t,x)-2\pi k_\ell\right)K\Big(\frac{y'-x}{\ep}\Big)dy'\\
        &=\int_{\R^d}\sin\left(u(t,y') - u(t,x)\right)K\Big(\frac{y'-x}{\ep}\Big)dy'=U(t,x),
    \end{align*}
    where in the second equality we made a change of variables $y=y'+2\pi \vec e_\ell$, and in the third equality we used the pseudo-periodicity of $u(t,\cdot)$. This completes the proof.
\end{proof}
}

\begin{proposition}\label{prop.existence}
Fix any $\ep>0$. Given \red{any pseudo-periodic $\tilde u_0\in C^1(\R^d,\R)$}, there exists a unique \red{pseudo-periodic function $u^{I,\ep}\in C\left([0,\infty);C^1(\R^d,\R)\right)$} satisfying \eqref{eq.fixedpoint} and hence a unique solution of \eqref{eq.integral}.
\end{proposition}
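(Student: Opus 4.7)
The plan is to apply the Banach fixed point theorem to the integral map
\begin{equation*}
(\Phi u)(t,x) := u_0(x) + \frac{1}{\sigma_d\ep^{d+2}}\int_0^t\int_{\T^d}\sin\left(u(s,y) - u(s,x)\right)K\left(\frac{y-x}{\ep}\right)dy\,ds
\end{equation*}
on the Banach space $X_{T_0}:=C\bigl([0,T_0];L^1(\T^d)\bigr)$ equipped with $\|u\|_{X_{T_0}}:=\sup_{t\in[0,T_0]}\|u(t)\|_{L^1}$, for a suitably small $T_0>0$ depending only on $\ep$. First I would verify that $\Phi$ maps $X_{T_0}$ into itself: since $\sin$ is bounded and $K$ is bounded, the inner double integral is uniformly bounded in $(t,x)$, and continuity of $t\mapsto(\Phi u)(t,\cdot)$ into $L^1(\T^d)$ follows from dominated convergence applied to the outer time integral.

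The contraction step uses the global Lipschitz bound $|\sin a-\sin b|\le|a-b|$. For $u,v\in X_{T_0}$ this gives
\begin{equation*}
|(\Phi u-\Phi v)(t,x)|\le \frac{1}{\sigma_d\ep^{d+2}}\int_0^t\int_{\T^d}\bigl(|u(s,y)-v(s,y)|+|u(s,x)-v(s,x)|\bigr)K\!\left(\tfrac{y-x}{\ep}\right)dy\,ds.
\end{equation*}
Integrating in $x$, applying Fubini, and using $\int_{\T^d}K((y-x)/\ep)\,dx\le\ep^d$ (which follows from $\int_{\R^d}K=1$ after the change of variables $z=(y-x)/\ep$, valid since $\ep\in(0,1)$ and $K$ is compactly supported in the unit ball), I obtain
\begin{equation*}
\|\Phi u(t)-\Phi v(t)\|_{L^1}\le\frac{2}{\sigma_d\ep^{2}}\int_0^t\|u(s)-v(s)\|_{L^1}\,ds\le\frac{2T_0}{\sigma_d\ep^{2}}\,\|u-v\|_{X_{T_0}}.
\end{equation*}
Choosing $T_0:=\sigma_d\ep^{2}/4$ makes $\Phi$ a strict contraction on $X_{T_0}$, so Banach's theorem produces a unique fixed point $u^I\in X_{T_0}$, i.e.\ a solution of \eqref{eq.fixedpoint} on $[0,T_0]$.

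For a global solution I would iterate the construction on the successive intervals $[kT_0,(k+1)T_0]$, $k\in\mathbb{N}$, taking $u^I(kT_0,\cdot)\in L^1(\T^d)$ as the new initial datum at each step. Because the Lipschitz constant of $\sin$ is universal and $T_0$ depends only on $\ep$ (not on the initial datum at the start of the step), the same contraction argument applies on every interval, and concatenation yields a unique $u^I\in C\bigl([0,\infty);L^1(\T^d)\bigr)$ solving \eqref{eq.fixedpoint}. Global uniqueness then follows by applying the same $L^1$-estimate to the difference of any two solutions with the same datum and invoking Grönwall's inequality. I don't anticipate a genuine obstacle here: the argument is the standard one for equations of the type considered in \cite{andreu2010nonlocal}, the only point requiring care being the bookkeeping of the $\ep$-dependent prefactors and the verification that the prefactor $1/(\sigma_d\ep^{d+2})$ cancels precisely against the mass $\ep^d$ of the scaled kernel.
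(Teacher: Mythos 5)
Your proposal is correct and follows essentially the same route as the paper: Banach's fixed point theorem on $C([0,T];L^1(\T^d))$, contraction via the Lipschitz bound $|\sin a-\sin b|\le|a-b|$, a local existence time depending only on $\ep$, and iteration to extend to $[0,\infty)$. The only cosmetic difference is that you compute the contraction constant $2T_0/(\sigma_d\ep^2)$ explicitly, whereas the paper leaves it as a generic $C(\kappa,d,\ep)$ and packages both the mapping and contraction estimates into a single claim.
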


\begin{proof}

Solutions of \eqref{eq.fixedpoint} are fixed points of the operator
\begin{align}\label{fixed-pt-map}
F_{\tilde u_0}(u^{I,\ep})(t,x) :=\tilde  u_0(x) + \frac{1}{\sigma_d\ep^{d+2}}\int_0^t\int_{\red{\R^d}}\sin\left(u^{I,\ep}(s,y) - u^{I,\ep}(s,x)\right)K\Big(\frac{y-x}{\ep}\Big)dy\:ds.
\end{align}
For a fixed \red{pseudo-periodic} initial condition \red{$\tilde u_0\in C^1(\R^d,\R)$} and a positive $T$ (to be chosen later) we consider the Banach space 
\red{\begin{align}\label{banach-ball}
\mathcal{X}_T:=\Big\{f: [0,T]\times\R^d\to\R:\; & f(t,\cdot)\text{ is pseudo-periodic},\, \, f\in C\big([0,T];C^1(\R^d,\R)\big), \nonumber\\
&f|_{t=0}=\tilde u_0, \;  \sup_{t\in[0,T]}\|f(t,\cdot)\|_{C^1(\Lambda_d)}\le 1+\|\tilde u_0\|_{C^1(\Lambda_d)}\Big\}
\end{align}
with the norm 
\[
\|f\|_{\mathcal{X}_T} := \sup_{t\in[0,T]}\|f(t,\cdot)\|_{C^1(\Lambda_d)},
\]
where by an abuse of notation, for a pseudo-periodic function $g:\R^d\to\R$, we write $\|g\|_{C^1(\Lambda_d)}$ for $\|g|_{\Lambda_d}\|_{C^1(\Lambda_d)}$. We immediately remark that the map $F_{\tilde u_0}$ \eqref{fixed-pt-map} takes a pseudo-periodic continuous function to another pseudo-periodic continuous function, with the same winding numbers. This is a consequence of Lemma \ref{lem:periodic} and that $\tilde u_0$ is pseudo-periodic.
}

We want to apply Banach's fixed point theorem  to $F_{\tilde u_0}$ in $\mathcal{X}_T$. We must check:

\begin{enumerate}[label=(\alph*)]

\item $F_{\tilde u_0}(\mathcal{X}_T)\subset\mathcal{X}_T$;

\item $F_{\tilde u_0}$ is a contraction, i.e.~there exists $\nu\in(0,1)$ such that 
\begin{equation*}
\|F_{\tilde u_0}(u^{I,\ep})-F_{\tilde u_0}(v)\|_{\mathcal{X}_T}\leq \nu\|u-v\|_{\mathcal{X}_T}
\end{equation*}
for all $u, v\in \mathcal{X}_T$.
\end{enumerate}

We first show (b). \red{We first observe that $h(x,y):=\sin\left(u(t,y)-u(t,x)\right)$ has bounded $C^1(\R^{2d})$-norm for each $t$. Indeed, $\|h\|_{L^\infty(\R^{2d})}\le 1$ and furthermore, $$\nabla_yh(x,y)=\cos\left(u(t,y)-u(t,x)\right)\nabla_y u(t,y).$$ Since $u(t,y)$ is pseudo-periodic, $\|\nabla_y u(t,y)\|_{L^\infty(\R^d)}$ is equal to $\|\nabla_y u(t,y)\|_{L^\infty(\Lambda_d)}$, and therefore is bounded. Hence $\|\nabla_yh(x,y)\|_{L^\infty(\R^{2d})}$ is bounded, and similarly $\|\nabla_xh(x,y)\|_{L^\infty(\R^{2d})}$ is bounded as well.}
We are in a situation to apply Lemma \ref{lem:derivative}, whereby for any $x\in\Lambda_d$,
\begin{align*}
& \sigma_d\ep^{d+2}|\nabla F_{\tilde u_0}(u)(t,x)-\nabla F_{\tilde u_0}(v)(t,x)| \\
& \le \int_0^t\int_{\R^d}\Big|\cos\left(u(s,y)-u(s,x)\right)\nabla u(s,y)-\cos\left(v(s,y)-v(s,x)\right)\nabla v(s,y)\Big|K\Big(\frac{y-x}{\ep}\Big)dy\:ds  \\
&+ \int_0^t\int_{\R^d}\Big|\cos\left(u(s,y)-u(s,x)\right)\nabla u(s,x)-\cos\left(v(s,y)-v(s,x)\right)\nabla v(s,x)\Big|K\Big(\frac{y-x}{\ep}\Big)dy\:ds.
\end{align*}
\red{
We notice that although we consider $x\in\Lambda_d$, there may be some $y\in\R^d$ that are within Euclidean distance $\ep$ from $x$ but are outside of $\Lambda_d$. Since $\ep\in(0,1)$, such $y$ must be in one of the adjacent boxes. We must have either $y_\ell=[y]_\ell\pm 2\pi$ or $y_\ell=[y]_\ell$, $\ell=1,2,...,d$ (see \eqref{def:mod}). For $g$ that is pseudo-periodic with winding numbers $\{k_\ell\}_{\ell=1}^d$, we have $g([y])=g(y)+\sum_{\ell=1}^da_\ell k_\ell \vec e_\ell$ for some $a_\ell\in\{0,\pm 2\pi\}$ and $\nabla g([y])=\nabla g(y)$. Hence, since the cosine is a $2\pi$-periodic function and $u,v$ are pseudo-periodic, the previous display equals
\begin{align*}
& =\int_0^t\int_{\R^d}\Big|\cos\left(u(s,[y])-u(s,x)\right)\nabla u(s,[y])-\cos\left(v(s,[y])-v(s,x)\right)\nabla v(s,[y])\Big|K\Big(\frac{y-x}{\ep}\Big)dy\:ds  \\
&+ \int_0^t\int_{\R^d}\Big|\cos\left(u(s,[y])-u(s,x)\right)\nabla u(s,x)-\cos\left(v(s,[y])-v(s,x)\right)\nabla v(s,x)\Big|K\Big(\frac{y-x}{\ep}\Big)dy\:ds.
\end{align*}}

Then by the triangle inequality and $1$-Lipschitz property of sine function, we further bound it by
\begin{align*}
& \le \int_0^t\int_{\R^d}\Big(|\nabla u(s,[y])-\nabla v(s,[y])| \\
&\hspace{4cm}+\big(|u(s,[y])-v(s,[y])|+|u(s,x)-v(s,x)|\big)|\nabla v(s,[y])|\Big)K\Big(\frac{y-x}{\ep}\Big)dy\:ds\\
&+\int_0^t\int_{\R^d}\Big(|\nabla u(s,x)-\nabla v(s,x)| \\
&\hspace{4cm}+\big(|u(s,[y])-v(s,[y])|+|u(s,x)-v(s,x)|\big)|\nabla v(s,x)|\Big)K\Big(\frac{y-x}{\ep}\Big)dy\:ds.
\end{align*}

Since $\|K\|_{L^1(\R^d)}=1$, applying \red{H\"older's inequality $$\|fK(\frac{y - \cdot}{\ep})\|_{L^1(\R^d)}\le \|f\|_{L^\infty(\R^d)}\|K(\frac{y-\cdot}{\ep})\|_{L^1(\R^d)} = \ep^d\|f\|_{L^\infty(\R^d)},$$ to the above integrals in $dy$, and noting that both $x,[y]\in\Lambda_d$}, we get from above
\begin{align}\label{eq:grad-diff}
&\sup_{t\in[0,T]}\|\nabla F_{\tilde u_0}(u)(t,\cdot)-\nabla F_{\tilde u_0}(v)(t,\cdot)\|_{L^\infty(\Lambda_d)}\\
&\le C\left(1+\|\tilde u_0\|_{C^1(\Lambda_d)}\right)T \sup_{s\in[0,T]}\|u(s,\cdot)-v(s,\cdot)\|_{L^\infty(\Lambda_d)}+CT \sup_{s\in[0,T]}\|\nabla u(s,\cdot)-\nabla v(s,\cdot)\|_{L^\infty(\Lambda_d)}, \nonumber
\end{align} 
where the constant $C=C(\ep, d)$ may change from line to line, and we used that since $v\in \cX_T$ \eqref{banach-ball}, its $C^1(\Lambda_d)$-norm in space is bounded by $1+\|\tilde u_0\|_{C^1(\Lambda_d)}$ up to time $T$.

Similarly, we can estimate 
\begin{align*}
&\sigma_d\ep^{d+2} |F_{\tilde u_0}(u)(t,x)-F_{\tilde u_0}(v)(t,x)| \\
& \leq \int_0^t\int_{\R^d}\Big|\sin\left(u(s,y)-u(s,x)\right)-\sin\left(v(s,y)-v(s,x)\right)\Big|K\Big(\frac{y-x}{\ep}\Big)dy\:ds \\
&= \red{\int_0^t\int_{\R^d}\Big|\sin\left(u(s,[y])-u(s,x)\right)-\sin\left(v(s,[y])-v(s,x)\right)\Big|K\Big(\frac{y-x}{\ep}\Big)dy\:ds,}
\end{align*}
which yields that
\begin{align}\label{eq:diff}
&\sup_{t\in[0,T]}\|F_{\tilde u_0}(u)(t,\cdot)-F_{\tilde u_0}(v)(t,\cdot)\|_{L^\infty(\Lambda_d)}\le C(\ep, d)T \sup_{t\in[0,T]}\|u(s,\cdot)-v(s,\cdot)\|_{L^\infty(\Lambda_d)}.
\end{align}
Recalling the definition of $C^1(\Lambda_d)$-norm, we get from \eqref{eq:grad-diff}-\eqref{eq:diff}
\begin{align*}
\|F_{\tilde u_0}(u)-F_{\tilde u_0}(v)\|_{\cX_T}\le CT\left(1+\|\tilde u_0\|_{C^1(\Lambda_d)}\right)\|u-v\|_{\cX_T},
\end{align*}
for some finite $C=C(\ep, d)$. 

For requirement (a), a similar argument (just do not consider $v$) shows that 
\begin{align*}
\|F_{\tilde u_0}(u)\|_{\cX_T}\le \|\tilde u_0\|_{C^1(\Lambda_d)}+CT\|u\|_{\cX_T}.
\end{align*}

Choosing $T_0:=\frac{1}{2C\left(1+\|\tilde u_0\|_{C^1(\Lambda_d)}\right)}$ we get by Banach's fixed point theorem local existence of a unique solution to \eqref{eq.fixedpoint} in the time interval $[0,T_0]$. We want to iterate the above argument and get global existence. We note that even though $T_0$ depends inversely on $C^1$-norm of the initial condition, the dependence is linear. On the other hand, our construction \eqref{banach-ball} is such that each iteration only increases the norm of the solution by at most $1$. This means that, if we iterate the previous argument infinitely many times, the sum of the local existence intervals diverges and we can reach any $T$. Finally note that a continuous solution of \eqref{eq.fixedpoint} is in fact differentiable in $t$ and as such, a solution of \eqref{eq.integral}.
\end{proof}

\subsection{Comparison Principle and Lipschitz Estimate}

This section is devoted to the proof of a comparison principle for \eqref{eq.integral} and the consequential uniform Lipschitz estimate. 
We start with the comparison principle.

\begin{lemma}\label{lem:max}
Fix $T$ finite. Let $\Psi: [0,\infty)\times\R^d\times\R^d\to\R_+$ be strictly positive and $v, w:[0, T]\times\R^d\to\R$ be two continuous functions,  with continuous time derivative, \red{$2\pi$-periodic in $x$} that satisfy
\begin{align}
\frac{d}{dt}v(t,x) - \mathcal L v (t,x) &\geq \frac{d}{dt}w(t,x) - \mathcal L w (t,x), \label{eq:comparison1} \\[4pt]
v(0,x)&\geq  w(0,x), \quad x\in\Lambda_d,\, t\in[0,T],  \label{eq:comparison2}
\end{align}
with 
\[
\mathcal L v (t,x) := \frac{1}{\sigma_d\ep^{d+2}}\int_{\R^d}\Psi(t, x,y)\left(v(t,y)-v(t,x)\right) K\Big(\frac{y-x}{\ep}\Big)dy.
\]
Then,
\[
v(t,x)\geq w(t,x), \quad  x\in\Lambda_d, \, t\in[0,T].
\]
\end{lemma}

\begin{proof}
Assume that the conclusion of the lemma fails to hold, i.e.~there exists $(t_0,x_0)\in[0,T]\times\Lambda_d$ (not necessarily in the interior), 
such that
\[
v(t_0,x_0)<w(t_0,x_0).
\]
We may assume that such a point is a minimum point for $v-w$ on $[0,T]\times\Lambda_d$ since $v,w$ are continuous. Notice in particular that $t_0>0$, since $(v-w)|_{t=0}\ge 0$. By \eqref{eq:comparison1},
\[
\frac{d}{dt}(v-w)(t_0,x_0)\geq \frac{1}{\sigma_d\ep^{d+2}}\int_{\R^d}\Psi(t, x_0, y)\left((v-w)(t_0,y)-(v-w)(t_0,x_0)\right) K\Big(\frac{y-x}{\ep}\Big)dy.
\]
But since $(t_0,x_0)$ is a minimum for $v-w$ on $[0,T]\times\Lambda_d$ and $\Psi$ is positive, we have
\begin{align*}
&\frac{1}{\sigma_d\ep^{d+2}}\int_{\R^d}\Psi(t, x_0, y)\left((v-w)(t_0,y)-(v-w)(t_0,x_0)\right) K\Big(\frac{y-x}{\ep}\Big)dy\\
&=\frac{1}{\sigma_d\ep^{d+2}}\int_{\R^d}\Psi(t, x_0, y)\left((v-w)(t_0,[y])-(v-w)(t_0,x_0)\right) K\Big(\frac{y-x}{\ep}\Big)dy> 0,
\end{align*}
\red{where we used the $2\pi$-periodicty of $v-w$ and $[y]\in\Lambda_d$ as defined in \eqref{def:mod}, }
and
\[
\frac{d}{dt}(v-w)(t_0,x_0)\le 0,
\]
thus we have a contradiction.
\end{proof}

From Lemma \ref{lem:max} we can deduce a Lipschitz estimate uniformly in $\ep$. Let us define
\[
\red{\|f\|_{\rm Lip(\Lambda_d)}:=\sup_{x\in\Lambda_d,\, h\in\R^d\backslash\{0\}}\frac{|f(x+h)-f(x)|}{|h|}.}
\]

\begin{lemma}\label{lem:lip}
Let $T\in(0,\infty)$ be fixed and \red{$\tilde u_0\in C^1(\R^d,\R)$ be pseudo-periodic}. Then there exist $\ep_0=\ep_0(T,\tilde u_0)>0$ and $C_T=C_T(\tilde u_0)$ finite such that if $\ep<\ep_0$, $u^{I,\ep}$ is the \red{unique pseudo-periodic solution of \eqref{eq.integral} from Proposition \ref{prop.existence},} then for every $0\le t \le T$,
\[
\|u^{I,\ep}(t,\cdot)\|_{\rm Lip(\Lambda_d)}\le C_T.
\]
\end{lemma}
\begin{remark}
The uniform Lipschitz estimate will be used in the next Section \ref{sec.microtoint}, but we point out that it is not needed for the proof of convergence of solutions of the integral equation to solutions of the heat equation. 
\end{remark}

\begin{proof}

Fix $h\in\R^d\backslash\{0\}$ and consider $w_h(t,x):=u^{I,\ep}(t,x+h)-u^{I,\ep}(t,x)$. \red{Since $u^{I,\ep}(t,\cdot)$ is pseudo-periodic, we see that $w_h(t,\cdot)$ is $2\pi$-periodic in each coordinate direction. This follows from \eqref{eq:pseudo-periodic}.} Using \eqref{eq:int}, we have that $w_h$ satisfies for every $x\in\Lambda_d$
\begin{align*}
\frac{d}{dt}&w_h(t,x)\\
=&\displaystyle\frac{1}{\sigma_d\ep^{d+2}}\int_{\R^d}\left(\sin\left(u^{I,\ep}(t,y+h)-u^{I,\ep}(t,x+h)\right)-\sin\left(u^{I,\ep}(t,y)-u^{I,\ep}(t,x)\right)\right)K\Big(\frac{y-x}{\ep}\Big)dy \:dy,
\end{align*}
by a change of variables, with $ w_h|_{t=0}=\tilde u_0(x+h)-\tilde u_0(x)$.
Note that we can rewrite
\begin{align*}
&\sin\left(u^{I,\ep}(t,y+h)-u^{I,\ep}(t,x+h)\right)-\sin\big(u^{I,\ep}(t,y)-u^{I,\ep}(t,x)\big)\\
&=\Psi(t, x,y)\big(w_h(t,y)-w_h(t,x)\big),
\end{align*}
with
\begin{align*}
\Psi(t, x, y)&:=\int_0^1\cos\left(s(u^{I,\ep}(t,y+h)-u^{I,\ep}(t,x+h))+(1-s)(u^{I,\ep}(t,y)-u^{I,\ep}(t,x))\right)\:ds.
\end{align*}

For some large constant $M$ (to be specified), let us define a time 
\begin{align*}
\tau_M:=\inf\big\{t\ge0: \|u^{I,\ep}(t, \cdot)\|_{\text{Lip}(\Lambda_d)}\ge M\big\}.
\end{align*}
We claim that, by taking $\ep$ small enough (for instance $\ep<\pi/(4M)$), we can ensure that $\Psi(t, x, z)\geq c_0>0$ for some constant $c_0=c_0(M)$, for any $t\in[0,\tau_M)$. Indeed, \red{the control of $\|u^{I,\ep}(t, \cdot)\|_{\text{Lip}(\Lambda_d)}$ up to $\tau_M$, together with the pseudo-periodicity of $u^{I,\ep}(t,\cdot)$, imply that for any $y$ with $|y-x|\le\ep$,
\begin{align*}
|u^{I,\ep}(t,y)&-u^{I,\ep}(t,x)|\le M|y-x|\le M\ep,\\
|u^{I,\ep}(t,y+h)&-u^{I,\ep}(t,x+h)|\\
=&|u^{I,\ep}(t, y-x+[x+h])-u^{I,\ep}(t,[x+h])|\le M|y-x|\le M\ep.
\end{align*}
In the last line, the difference of $u^{I,\ep}(t,\cdot)$ at two different spatial points $y+h, x+h$ is unchanged under a simultaneous translation by $[x+h]-(x+h)$ (which is a multiple of $2\pi$ in each coordinate direction), due to pseudo-periodicity of $u^{I,\ep}(t,\cdot)$, see \eqref{eq:pseudo-periodic}.} The argument of the cosine in $\Psi(x,z )$ is thus uniformly close to $0$. 

To apply the maximum principle, Lemma \ref{lem:max}, we next use the following barrier
\[
\bar w(t,x):=(t+1)|h|\|\tilde u_0\|_{\rm Lip(\Lambda_d)}, \quad t\ge0, \, x\in\R^d
\]
(notice that this function is space independent). The function $\bar w$ satisfies
\[
\frac{d}{dt}\bar w (t,x)- \frac{1}{\sigma_d\ep^{d+2}}\int_{\R^d}\Psi(t, x, y)\left(\bar w(t, y)-\bar w(t, x)\right) K\Big(\frac{y-x}{\ep}\Big)dy =\frac{d}{dt}\bar w= |h|\|u_0\|_{\rm Lip}\ge 0,
\]
and as shown above, $w_h$ satisfies
\[
\frac{d}{dt}w_h(t,x)-\frac{1}{\sigma_d\ep^{d+2}}\int_{\R^d}\Psi\left(t, x, y\right)\left(w_h(t, y)-w_h(t, x)\right) K\Big(\frac{y-x}{\ep}\Big)dy dy=0.
\]
Also, $\bar w(0,x)\geq  w_h(0,x)$ for $x\in\Lambda_d$ since due to $u^{I,\ep}|_{t=0}=\tilde u_0\in C^1(\R^d)$, we have
\[
w_h(0,x)\le |w_h(0,x)|=|\tilde u_0(x+h)-\tilde u_0(x)|\le \|\tilde u_0\|_{\rm Lip(\Lambda_d)}|h|=\bar w(0,x).
\]
Hence, \red{with both functions $w_h, \bar w$ continuous and $2\pi$-periodic,} we satisfy the conditions of Lemma \ref{lem:max} which yields
\begin{align*}
w_h(t,x)\leq \bar w(t,x)= (t+1) |h|\|u_0\|_{\rm Lip(\Lambda_d)},\quad t\in[0,\tau_M),\, x\in\Lambda_d.
\end{align*}
An analogous reasoning for $-w_h$ gives 
\begin{align*}
-w_h(t,x)\leq (t+1) |h|\|\tilde u_0\|_{\rm Lip(\Lambda_d)},\quad t\in[0,\tau_M),\, x\in\Lambda_d.
\end{align*}
In other words, for $t\in[0,\tau_M)$,
\begin{align}\label{heat-comparison}
\sup_{x\in\Lambda_d}\frac{|u^{I,\ep}(t,x+h)-u^{I,\ep}(x)|}{|h|}\le (t+1)\|\tilde u_0\|_{\rm Lip(\Lambda_d)}.
\end{align}
Choosing $M=2(T+1)\|\tilde u_0\|_{\rm Lip(\Lambda_d)}$ then ensures that $\tau_M> T$, and hence \eqref{heat-comparison} holds for any $t\in[0,T]$. Since the right-hand-side of \eqref{heat-comparison} does not depend on $h$, we obtain the desired Lipschitz bound with $C_T:=(T+1)\|\tilde u_0\|_{\rm Lip(\Lambda_d)}$.
\end{proof}

\subsection{Convergence to solutions of the heat equation}

Next we show that solutions to the integral equation \eqref{eq:int} converge to solutions of the heat equation \eqref{eq:heat} as $\ep \to 0$. Notice that we are assuming the same initial condition for the heat and integral equations.
\begin{proposition}\label{prop.convergence}
Let $u:[0,\infty)\times\R^d\to\R$ be the \red{unique pseudo-periodic} solution of \eqref{eq:heat} and $u^{I,\ep}:[0,\infty)\times \R^d\to\R$ the \red{unique  pseudo-periodic solution of \eqref{eq.integral}, with the same pseudo-periodic initial condition $u|_{t=0}=u^{I,\ep}|_{t=0}=\tilde u_0\in C^{2,\alpha}(\R^d,\R)$} for some $\alpha\in(0,1)$. Then for any $T>0$ there exists $C=C(T, \alpha, \tilde u_0, K)>0$ and $\ep_0\in(0,1)$ such that for any $\ep<\ep_0$,
\[
\sup_{t\in[0,T]}\|u^{I,\ep}(t,\cdot)-u(t,\cdot)\|_{L^\infty(\Lambda_d)}\leq C\ep^\alpha.
\]
\end{proposition}

Before proving Proposition \ref{prop.convergence}, we show pointwise convergence of the integral operator to the Laplacian.
\begin{lemma}\label{lem:integraltoheat}
\red{Let $v\in C^{2,\alpha}(\R^d, \R)$ be pseudo-periodic, for some  $\alpha\in(0,1)$, then there exists a finite constant $\tilde C$ depending only on $K, d$ and the $C^{2,\alpha}(\Lambda'_d)$-norm of $v$ such that for any $\ep\in(0,1)$
\[
\sup_{x\in\Lambda_d}\Big|\frac{1}{\sigma_d\ep^{d+2}}\int_{\R^d}\sin\left(v(y)-v(x)\right)K\Big(\frac{y-x}{\ep}\Big)dy-\frac{\kappa_2}{2d}\Delta v(x)\Big|\le \tilde {C}\ep^\alpha,
\]
where $\Lambda'_d:=[-1,2\pi+1]^d$ is a $1$-enlargement of $\Lambda_d$.}
\end{lemma}

\begin{proof}
Note that for all $t\in\R$ it holds
\[
|\sin t-t|\le t^3.
\]
Indeed, let us check this for $t\ge0$, and the case $t<0$ follows since the involved functions are odd. Let $h(t):=t-\sin t-t^3$ which satisfies $h(0)=0$ and $h'(t)=1-\cos t-3t^2=2\sin^2\frac{t}{2}-3t^2\le \frac{t^2}{2}-3t^2\le 0$, and thus $h(t)\le h(0)=0$ for all $t\ge0$. Let us denote $E(t):=\sin t -t$ and we have
\begin{align*}
&\frac{1}{\sigma_d\ep^{d+2}}\int_{\R^d}\sin\left(v(y)-v(x)\right)K\Big(\frac{y-x}{\ep}\Big)dy \\
& 	= \frac{1}{\sigma_d\ep^{d+2}}\int_{\R^d}\left(v(y)-v(x)\right)K\Big(\frac{y-x}{\ep}\Big)dy +\frac{1}{\sigma_d\ep^{d+2}}\int_{\R^d}E\left(v(y)-v(x)\right)K\Big(\frac{y-x}{\ep}\Big)dy \\
& =: (A)+(B).
\end{align*}
Regarding the first term (A), since $v\in C^{2,\alpha}(\R^d)$ we have that
\[
\Big|v(y)-v(x)-D v(x)\cdot(y-x)-\frac{1}{2}D^2v(x)(y-x)\cdot(y-x)\Big|\le \tilde C|x-y|^{2+\alpha}
\]
for all $x\in\Lambda_d$, $|y-x|\le 1$ and some finite constant $\tilde C$ that depends on the $C^{2,\alpha}(\Lambda'_d)$-norm of $v$. Hence,
\begin{align*}
\Big|(A) & -\frac{1}{\sigma_d\ep^{d+2}}\left(\int_{\R^d}D v(x)\cdot(y-x)K\Big(\frac{y-x}{\ep}\Big)dy -\int_{\R^d}\frac{1}{2}D^2v(x)(y-x)\cdot(y-x)K\Big(\frac{y-x}{\ep}\Big)dy\right)\Big|\\
&\le \frac{1}{\sigma_d\ep^{d+2}}\int_{\R^d}\tilde C|x-y|^{2+\alpha}K\Big(\frac{y-x}{\ep}\Big)dy=\frac{\ep^\alpha}{\sigma_d}\int_{\R^d}\tilde C|z|^{2+\alpha}K(z)dz\\
&\le\tilde C\ep^\alpha.
\end{align*}
Since $K$ is radially symmetric, by symmetry of the integrands we have
\[
\int_{\R^d}D v(x)\cdot(y-x)K\Big(\frac{y-x}{\ep}\Big)dy=0, \quad \delta_{i\neq j}\int_{\R^d}\frac{1}{2}D_{ij}v(x)(y-x)_i(y-x)_jK\Big(\frac{y-x}{\ep}\Big)dy=0,
\]
and recalling the constant $\kappa_2$ \eqref{kappa-2} 
\[
\frac{1}{\sigma_d\ep^{d+2}}\int_{\R^d}\frac{1}{2}\sum_{i=1}^d D_{ii}v(x)(y-x)^2_i K\Big(\frac{y-x}{\ep}\Big)dy=\frac{\kappa_2}{2d}\Delta v(x).
\]
This shows that 
\[
\Big|(A)-\frac{\kappa_2}{2d}\Delta v(x)\Big|\le \tilde C\ep^\alpha.
\]
Turning to the second term (B), for any $x\in\Lambda_d$ and $|y-x|\le 1$ we have
\begin{align*}
   &|(B)|= \Big|\frac{1}{\sigma_d\ep^{d+2}}\int_{\R^d}E\left(v(y)-v(x)\right)K\Big(\frac{y-x}{\ep}\Big)dy \Big|\\
   &\le \frac{1}{\sigma_d\ep^{d+2}}\int_{\R^d}|E\left(v(y)-v(x)\right)|K\Big(\frac{y-x}{\ep}\Big)dy \le \frac{1}{\sigma_d\ep^{d+2}}\int_{\R^d}|v(y)-v(x)|^3K\Big(\frac{y-x}{\ep}\Big)dy\\
   &\le \frac{1}{\sigma_d\ep^{d+2}}\int_{\R^d}\|v\|_{C^1(\Lambda'_d)}^3|y-x|^3K\Big(\frac{y-x}{\ep}\Big)dy
   = \frac{\ep}{\sigma_d}\|v\|_{C^1(\Lambda'_d)}^3\int_{\R^d}|z|^3K(z)dz\\
   &=\tilde C \ep,
\end{align*}
for some possibly different $\tilde C$.
\end{proof}

We are ready to prove the convergence of $u^{I,\ep}$ to $u$.

\begin{proof}[Proof of Proposition \ref{prop.convergence}]
Since $\tilde u_0\in C^{2, \alpha}(\R^d,\R)$ for some $\alpha\in(0,1)$, parabolic regularity theory (Schauder estimates) guarantees that the unique solution of the heat equation $u(t,x)$ is $C^{2,\alpha}$ in space and $C^{1,\alpha/2}$ in time, cf.~\cite[Theorem 9.1.2]{krylov}. \red{More precisely, since the supremum norm of $\tilde u_0$ over $\R^d$ is infinite when the winding numbers are non-zero, to apply the standard Schauder theorem we can first subtract the ``tilt" $\tilde u_0(x)-\sum_{\ell=1}^d k_\ell x\cdot\vec e_\ell$ from the initial condition, solve the heat equation with such initial condition in the correct H\"older space, and then add back the ``tilt".}
Let 
\[
w^\ep(t,x):=u^{I,\ep}(t,x)-u(t,x). 
\]
\red{Since $u^{I,\ep}, u$ are both pseudo-periodic, continuous, with the same pseudo-periodic initial condition $\tilde u_0$, they have the same time-independent winding numbers. Hence, $w^\ep(t,\cdot)$ is $2\pi$-periodic in each coordinate direction.}
We write
\begin{align*}
&\frac{d}{dt}w^\ep(t,x)\\
&= \frac{1}{\sigma_d\ep^{d+2}}\int_{\R^d}\left(\sin\left(u^{I,\ep}(t,y)-u^{I,\ep}(t,x)\right)-\sin\left(u(t,y)-u(t,x)\right)\right)K\Big(\frac{y-x}{\ep}\Big)dy + E_\ep(u)(t,x),
\end{align*}
where
\begin{equation}\label{eq:error}
E_\ep(u)(t,x):=\frac{1}{\sigma_d\ep^{d+2}}\int_{\R^d}\sin\left(u(t,y)-u(t,x)\right)K\Big(\frac{y-x}{\ep}\Big)dy-\frac{1}{2}\kappa_2\Delta u(x). 
\end{equation}
For the pseudo-periodic solution $u(t,x)$ of the heat equation \eqref{eq:heat}, during time interval $[0,T]$ its spatial $C^{2,\alpha}(\Lambda'_d)$-norm is uniformly bounded by a constant that depends on $\tilde u_0, T, d$, and hence thanks to Lemma \ref{lem:integraltoheat}, the error term 
\begin{align}\label{bound:error}
\sup_{t\in[0,T], \, x\in\Lambda_d}|E_\ep(u)(t,x)|\le \tilde C\ep^\alpha
\end{align}
for some finite constant $\tilde C=\tilde C(\tilde u_0,T, d, K)$ independent of $\ep$. 

Now, for any $x\in\Lambda_d$, 
\begin{align*}
&\sin\left(u^{I,\ep}(t,y)-u^{I,\ep}(t,x)\right)-\sin\left(u(t,y)-u(t,x)\right)\\
&=\Psi(t, x,y)\left(w^\ep(t,y)-w^\ep(t,x)\right)
\end{align*}
where 
\begin{align*}
\Psi(t, x,y)&:=\int_0^1\cos \left(s\left(u^{I,\ep}(t,y)-u^{I,\ep}(t,x)\right)+(1-s)\left(u(t,y)-u(t,x)\right)\right)ds,
\end{align*}

Notice that, due to the uniform (in $\ep$) Lipschitz bound on $u^{I,\ep}(t,\cdot)$ (see Lemma \ref{lem:lip}) and $u(t,\cdot)$ in the time interval $[0,T]$, for $y$ with $|y-x|\le \ep$, the argument of the cosine in $\Psi(t, x,y)$ can be chosen to be uniformly close to $0$ (by making $\ep$ small enough), and $\Psi(t,x,y)$ bounded uniformly away from $0$.

We want to apply the comparison principle Lemma \ref{lem:max}. Let
\[
\bar{w}(t,x):=C_1t\ep^\alpha+C_2\ep
\]
(notice that this function is space-independent) so that 
\[
\frac{d}{dt}\bar w(t,x) - \frac{1}{\sigma_d\ep^{d+2}}\int_{\R^d}\Psi(t,x,y)\left(\bar w(t,y)-\bar w(t,x)\right) K\Big(\frac{y-x}{\ep}\Big)dy=\frac{d}{dt}\bar w(t,x) =C_1\ep^\alpha .
\]
Thanks to \eqref{eq:error} we can choose $C_1>\tilde C$ of \eqref{bound:error} such that $\bar w$ and $w^\ep$ satisfy \eqref{eq:comparison1} and choosing $C_2>0$ we can also get \eqref{eq:comparison2} (since $w^\ep|_{t=0}=0$). Then, \red{since $w^\ep, \bar w$ are both continuous and $2\pi$-periodic},  Lemma \ref{lem:max} yields
\[
u^{I,\ep}(t,x)-u(t,x)=w^\ep(t,x)\leq C_1t\ep^\alpha+C_2\ep
\]
for all $t\in[0,T]$. An analogous reasoning with $-w^\ep$ gives the lower bound and hence proves the result.
\end{proof}

\section{Comparison between microscopic model and integral equation}\label{sec.microtoint}

In this section we prove the convergence of solutions to the discrete model \eqref{eq:km} to solutions of the integral equation \eqref{eq:int}. This, combined with the results of the previous section, gives the proof of Theorem \ref{thm.main}. 

\begin{proposition}\label{ppn:kur-to-int}
Let $n\in\N$ and $V=\{x_1,...,x_n\}$ be a sample of i.i.d.~points with uniform distribution in $\T^d$, $d\ge 1$ and assume $\ep=\ep(n)$ satisfies Condition \ref{assump-eps}. Assume also 
\[
\sum_{n=1}^\infty \P\left(\|u^n_0 - \tilde u_0\|_{L^\infty(V)} > \delta \right)<\infty
\]
for any $\delta>0$. If $u^{n}$ is the unique solution of \eqref{eq:km} with initial condition $u^n_0$, $u^{I,\ep}$ is the unique \red{pseudo-periodic solution of \eqref{eq:int} obtained in Proposition \ref{prop.existence} with initial condition $\tilde u_0\in C^1(\R^d, \R)$ pseudo-periodic,} we have that
\[
\lim_{n\to\infty}\left\|u^{n}-u^{I,\ep}\right\|_{L^\infty([0,T]\times V)}=0, \quad a.s.
\]
\end{proposition}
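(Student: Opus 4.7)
Set $w^n(t,x_i):=u^n(t,x_i)-u^I(t,x_i)$. Subtracting \eqref{eq.integral} evaluated at $x=x_i$ from \eqref{eq:km} and using $\sin a-\sin b=\cos(\xi)(a-b)$ gives
\[
\frac{d}{dt}w^n(t,x_i) = \frac{1}{\ep^2 N_i}\sum_{j=1}^n\cos(\xi_{ij}(t))\bigl(w^n(t,x_j)-w^n(t,x_i)\bigr)K\bigl(\ep^{-1}(x_j-x_i)\bigr) + R_i(t),
\]
where $\xi_{ij}(t)$ lies between $u^n(t,x_j)-u^n(t,x_i)$ and $u^I(t,x_j)-u^I(t,x_i)$, and the Monte Carlo residual is
\[
R_i(t):=\frac{1}{\ep^2 N_i}\sum_{j=1}^n \phi_i(t,x_j) - \frac{1}{\sigma_d\ep^{d+2}}\int_{\T^d}\phi_i(t,y)\,dy,\quad \phi_i(t,y):=\sin\bigl(u^I(t,y)-u^I(t,x_i)\bigr)K(\ep^{-1}(y-x_i)).
\]
Fix $\delta\in(0,\pi/8)$ and set $\tau_\delta:=\inf\{t\ge 0:\max_i|w^n(t,x_i)|>\delta\}$. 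By Lemma \ref{lem:lip}, when $|x_j-x_i|<\ep$ one has $|u^I(t,x_j)-u^I(t,x_i)|\le C_T\ep$, hence for $t<\tau_\delta$ and $\ep$ small, $|\xi_{ij}(t)|\le C_T\ep+2\delta<\pi/2$, so $\cos(\xi_{ij}(t))>0$. At any argmax $i^\star$ of $w^n(t,\cdot)$ the diffusive term is then a sum of nonpositive contributions; together with the symmetric bound at an argmin, a standard Dini-derivative argument yields, for $t<\tau_\delta$,
\begin{equation}\label{eq:plan-bound}
\max_{1\le i\le n}|w^n(t,x_i)| \le \max_i|w^n(0,x_i)| + \int_0^t\sup_{1\le i\le n}|R_i(s)|\,ds.
\end{equation}

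The task reduces to proving $\sup_{t\in[0,T]}\sup_i|R_i(t)|\to 0$ a.s. Decompose $R_i=A_i+B_i$ with
\[
A_i(t):=\mu_i(t)\!\left(\frac{n}{\ep^2 N_i}-\frac{(2\pi)^d}{\sigma_d\ep^{d+2}}\right),\quad B_i(t):=\frac{1}{\ep^2 N_i}\Big(\sum_{j=1}^n\phi_i(t,x_j)-n\mu_i(t)\Big),\quad \mu_i(t):=(2\pi)^{-d}\!\int_{\T^d}\phi_i(t,y)\,dy.
\]
Lemma \ref{lem:lip} and the compact support of $K$ give $\|\phi_i(t,\cdot)\|_\infty\le C\ep$, $|\mu_i(t)|\le C\ep^{d+1}$, and $\mathrm{Var}(\phi_i(t,x_j))\le C\ep^{d+2}$. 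Applying \eqref{bernstein} to $N_i$ and the resulting bound to the first factor of $A_i$ yields $|A_i(t)|\le C\sqrt{\log n/(n\ep^{d+2})}$ uniformly in $i$ with probability $\ge 1-n^{-c}$; a second Bernstein bound on the centered sum in $B_i$ combined with a union bound over $i$ yields the same rate. To pass to $\sup_t$, observe that by Lemma \ref{lem:lip} we have $|\partial_t u^I(t,y)|\le C/\ep$, hence $|\partial_t R_i(t)|\le C/\ep^3$; discretizing $[0,T]$ at step $\Delta t=\ep^3\sqrt{\log n/(n\ep^{d+2})}$ and taking a union bound over the polynomial-in-$n$ grid of $(i,t_k)$ (with $c$ suitably enlarged) gives $\sup_{t\in[0,T]}\sup_i|R_i(t)|\le C\sqrt{\log n/(n\ep^{d+2})}$ off a set of summable probability.

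Under Condition \ref{assump-eps} this rate tends to zero, and Borel-Cantelli yields $\sup_{t\in[0,T]}\sup_i|R_i(t)|\to 0$ a.s.; the initial-data hypothesis and Borel-Cantelli give $\max_i|w^n(0,x_i)|\to 0$ a.s. Plugging into \eqref{eq:plan-bound}, the right-hand side is eventually below $\delta/2$ almost surely, so $\tau_\delta>T$ for all large $n$, and the stated uniform convergence follows. The main obstacle is the circularity in the maximum-principle step: the dissipative structure relies on $\cos(\xi_{ij})>0$, which already requires $w^n$ to be small. The stopping-time bootstrap closes this loop cleanly, but only because the $\ep$-uniform Lipschitz estimate in Lemma \ref{lem:lip} simultaneously controls the angles $\xi_{ij}$ and the size of $\phi_i$, the latter being what produces the favorable variance $O(\ep^{d+2})$ in the Bernstein bound for $B_i$.
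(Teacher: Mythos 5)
Your proposal is correct and follows the same overall strategy as the paper: linearize the $\sin$-difference by the mean value theorem, run a stopping-time bootstrap so the cosine coefficients stay positive, apply a discrete maximum principle to obtain a Gronwall-type bound, control the Monte Carlo residual by a concentration inequality plus a time grid, and finish with Borel--Cantelli. The one genuinely different piece is the concentration step. The paper's Lemma~\ref{lem:tail-bd} conditions on $x_i$ and on the neighbor set $\cN(i)$, under which the summands $\xi^i_j$ for $j\in\cN(i)$ become i.i.d.\ uniform over $\B(0,1)$, and applies Hoeffding to these $N_i\approx\ep^d n$ terms bounded by $C\ep$; the degree $N_i$ is then controlled separately via \eqref{bernstein}. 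You instead split the residual into a degree-fluctuation term $A_i$ and a centered-sum term $B_i$, apply \eqref{bernstein} to the former, and apply Bernstein unconditionally to all $n$ i.i.d.\ summands $\phi_i(t,x_j)$ in the latter, exploiting that the compact support of $K$ gives the sharpened variance $\mathrm{Var}(\phi_i(t,x_j))=O(\ep^{d+2})$ rather than merely the sup bound $O(\ep^2)$. That refinement is essential: a Hoeffding bound on the $n$ unconditioned terms using only the $L^\infty$ control would yield an exponent of order $\ep^{2d+2}n\delta^2$ and a strictly stronger hypothesis than Condition~\ref{assump-eps}, and you correctly identify this as the crux. Your argmax/Dini-derivative argument is an equivalent formulation of the paper's Lemma~\ref{max-disc}, and your time grid of mesh $\ep^3\sqrt{\log n/(n\ep^{d+2})}$ plays the same role as the paper's $\ep^4$ grid; both are polynomial in $n$ under Condition~\ref{assump-eps}.
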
 

\red{As described in Remark \ref{rmk:km-extension}, we extend the initial condition $u_0^n$ from $V$ to $\tilde V$ pseudo-periodically. For the latter, we need to choose and fix the integers $\{\tilde k_\ell\}_{\ell=1}^d$ mentioned in that remark. Since our aim is to approximate the integral equation solution $u^{I,\ep}$ which is pseudo-periodic with winding numbers $\{k_\ell\}_{\ell=1}^d$ (determined by its initial condition $\tilde u_0$), we set  $\tilde k_\ell:=k_\ell$ for $\ell=1,2,...,d$.

We also introduce a convention: For $x_i\in V$ (original point in $\Lambda_d$), its neighbors in the extended cloud are those points $y\in \tilde V$ within Euclidean distance $\ep$ from it. Such $y$ may be outside of $\Lambda_d$, but it is a copy of some $x_j\in V$ inside $\Lambda_d$. Note that this $y$ coincides with the $x^i_j$ defined in Remark \ref{rmk:tor-dist} for $\ep \in (0,1)$. For the rest of this section, for $x_i\in\Lambda_d$, we use $\{x_j,\, j\in\cN(i)\}$ to denote neighbors of $x_i$ in $\tilde V$, and in the case that $x_j$ is outside of $\Lambda_d$, it is not the original point but its copy; we use instead the notation $[x_j]$ (see \eqref{def:mod}) to denote the original point in $\Lambda_d$.}

We first state a discrete maximum principle similar to Lemma \ref{lem:max}.
\begin{lemma}\label{max-disc}
Fix $n\in\N$. Let $a:[0,\infty)\times \tilde V^2\to\R_{\ge0}$ satisfy $a(t,x_i,x_j)>0$ whenever $j\in\cN(i)$, for all $t>0$, and $u, v: [0,\infty)\times \tilde V\to\R$, \red{$2\pi$-periodic in spatial variable in each coordinate direction,} satisfy for any $x_i\in V$
\begin{align*}
\frac{d}{dt}u(t,x_i)- \sum_{j\in\cN(i)}a(t,x_i,x_j)\left(u(t,x_j)-u(t,x_i)\right)  &> \frac{d}{dt}v(t,x_i)-\sum_{j\in\cN(i)}a(t,x_i,x_j)\left(v(t,x_j)-v(t,x_i)\right)   \\
 u(0, x_i)&\geq  v(0, x_i).
\end{align*}
Then, we have that 
\begin{align*}
u(t,x_i)\ge v(t,x_i), \quad \forall t\ge0, \; x_i\in V.
\end{align*}
\end{lemma}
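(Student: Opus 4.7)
The plan is the standard contradiction-via-first-time-of-failure argument, made trivial by the strict inequality in the hypothesis. Set $w(t,x_i):=u(t,x_i)-v(t,x_i)$. By linearity, the two displayed differential inequalities on $u$ and $v$ combine into the single strict inequality
\begin{equation*}
\frac{d}{dt}w(t,x_i) > \sum_{j\in\cN(i)} a(t,i,j)\bigl(w(t,x_j)-w(t,x_i)\bigr),\qquad i\in V,\ t>0,
\end{equation*}
together with $w(0,x_i)\ge 0$ for every $i\in V$. The aim is to conclude $w(t,x_i)\ge 0$ for all $t\ge 0$ and $i\in V$.

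Assume for contradiction there exist $t_1>0$ and $i_1\in V$ with $w(t_1,x_{i_1})<0$. Using that $V$ is finite and $t\mapsto w(t,x_i)$ is $C^1$ (as a solution of an ODE system), define
\begin{equation*}
t_0:=\inf\Bigl\{t\ge 0\ :\ \min_{i\in V} w(t,x_i) < 0\Bigr\}\in[0,t_1].
\end{equation*}
By continuity, $w(t_0,x_i)\ge 0$ for all $i$, and there exists $i_0\in V$ with $w(t_0,x_{i_0})=0$.

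First suppose $t_0>0$. Since $w(t,x_{i_0})\ge 0$ for $t\in[0,t_0]$ and $w(t_0,x_{i_0})=0$, the (left) derivative satisfies $\tfrac{d}{dt}w(t_0,x_{i_0})\le 0$. On the other hand, the strict differential inequality at $(t_0,x_{i_0})$ gives
\begin{equation*}
\frac{d}{dt}w(t_0,x_{i_0}) \ >\ \sum_{j\in\cN(i_0)} a(t_0,i_0,j)\,w(t_0,x_j) \ \ge\ 0,
\end{equation*}
since $a(t_0,i_0,j)>0$ for $j\in\cN(i_0)$ and $w(t_0,x_j)\ge 0$ for every $j$. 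This is a contradiction.

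If instead $t_0=0$, let $S:=\{i\in V: w(0,x_i)=0\}$, which is nonempty by the choice of $i_0$. For each $i\in S$, the hypothesis at $t=0$ yields $\tfrac{d}{dt}w(0,x_i) > \sum_{j\in\cN(i)} a(0,i,j)\,w(0,x_j)\ge 0$, so there is $\delta_i>0$ with $w(t,x_i)>0$ on $(0,\delta_i)$. For $i\notin S$ we already have $w(0,x_i)>0$, so by continuity $w(t,x_i)>0$ on some $(0,\delta_i)$ as well. Taking $\delta:=\min_{i\in V}\delta_i>0$ (a minimum over finitely many indices), we find $w(t,x_i)>0$ for all $i\in V$ and $t\in(0,\delta)$, contradicting the definition of $t_0=0$. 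There is no real obstacle here; the only point requiring care is the use of strict inequality to rule out the case $t_0=0$ (where otherwise a perturbation $w_\eta:=w+\eta(1+t)$ would be needed, but the strict sign makes it unnecessary).
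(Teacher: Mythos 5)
Your proof is correct and follows essentially the same route as the paper's: set $w=u-v$, argue by contradiction via the first time $t_0$ at which $\min_i w(t,x_i)$ would drop below zero, and note that at $(t_0,i_0)$ the time derivative is $\le 0$ while the strict differential inequality together with $w(t_0,x_j)\ge 0$ for all $j$ and $a\ge 0$ forces it to be $>0$. The only cosmetic difference is that you split off the case $t_0=0$ and rule it out explicitly, whereas the paper treats $t_*$ uniformly; both hinge on the same strict inequality and neither requires any new idea.
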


\begin{proof}
We denote $w(t,x_i):=u(t,x_i)-v(t,x_i)$, $x_i\in\tilde V$. The hypothesis can be rewritten as 
\begin{align}\label{contra-disc}
\frac{d}{dt}w(t,x_i)> \sum_{j\in\cN(i)}a(t,x_i,x_j)\left(w(t,x_j)-w(t,x_i)\right), \quad x_i\in V.
\end{align}
Assume the conclusion is false, namely $\inf_{\{t>0,x_i\in V\}}w(t,x_i)<0$. Since $w(0,x_i)\ge0$ for all $x_i\in V$ by hypothesis, there exists a time $t_*$ such that
\[
t_*:=\inf\Big\{t\ge0: \min_{x_i\in V}w(t,x_i)<0\Big\}.
\]
The index $i$ that achieves this first crossing of zero may be non-unique, in which case we take an arbitrary one, call it $i_*$. On the one hand, we must have $\displaystyle{\frac{d}{dt}}w(t_*, x_{i_*})\le 0$, and on the other hand, $w(t_{*}, x_{i_*})=\min_{x_j\in V}w(t_*, x_j)$. With $a(t_*, x_{i_*}, x_j)>0$ for any $j\in\cN(i_*)$ by hypothesis, we have that 
\begin{align*}
&\sum_{j\in\cN(i_*)}a(t_*, x_{i_*},x_j)\left(w(t_*,x_j)-w(t_*,x_{i_*})\right)\\
&=\sum_{j\in\cN(i_*)}a(t_*, x_{i_*},x_j)\left(w(t_*,[x_j])-w(t_*,x_{i_*})\right)\ge 0,
\end{align*}
\red{where we used the $2\pi$-periodicity of $w$ on $\tilde V$.} This is in contradiction with \eqref{contra-disc} at $t=t_*$, $i=i_*$.
\end{proof}


Fix $T$ finite. Let us denote the difference we want to estimate
\begin{align*}
e_n(t,x):=u^{n}(t,x)-u^{I,\ep}(t,x), \quad x\in \tilde V, \, t\ge0.
\end{align*}
\red{Since both $u^n(t,\cdot), u^{I,\ep}(t,\cdot)$ are pseudo-periodic on $\tilde V$ with the same winding numbers $\{k_\ell\}_{\ell=1}^d$, their difference $e_n(t,\cdot)$ is $2\pi$-periodic on $\tilde V$ in each coordinate direction.}
Then, for every $x_i\in V$, and due to \ref{eq:ext-kur}, we have that
\begin{align}
\frac{d}{dt}e_n(t,x_i)&=\frac{1}{\ep^{2}N_i}\sum_{\red{x_j\in\tilde V}}\left[\sin \left(u^{n}(t,x_j)-u^{n}(t,x_i)\right)-\sin \left(u^{I,\ep}(t,x_j)-u^{I,\ep}(t,x_i)\right)\right]K\left(\ep^{-1}(x_j-x_i)\right) \nonumber\\
&\qquad+\Bigg[\frac{1}{\ep^{2}N_i}\sum_{\red{x_j\in\tilde V}}\sin \left(u^{I,\ep}(t,x_j)-u^{I,\ep}(t,x_i)\right)K\left(\ep^{-1}(x_j-x_i)\right) \nonumber\\
&\hspace{3cm} -\frac{1}{\sigma_d\ep^{d+2}}\int_{\red{\R^d}}\sin\left((u^{I,\ep}(t,y)-u^{I,\ep}(t,x_i)\right)K\left(\ep^{-1}(y-x_i)\right)dy\Bigg]  \nonumber\\
&=:A(t,i)+B(t,i).  \label{triangle}
\end{align}
Our main task is to analyse the two difference terms marked as $A(t,i)$ and $B(t,i)$. 

\begin{lemma}\label{lem:tail-bd}
For every $T<\infty$ and \red{$\tilde u_0\in C^1(\R^d,\R)$ pseudo-periodic}, there exist constants $C, C'\in(0,\infty)$ such that for any $\delta>0$, $n\in\N$ and $\ep\in(0,1)$ we have that 
\begin{align*}
\P\Big(\sup_{x_i\in V}\sup_{t\in[0,T]}|B(t,i)|\ge\delta\Big)\le Cn^{3}\big(e^{-C'\ep^{d+2} n \delta^2}+e^{-C'\ep^dn}\big).
\end{align*}
\end{lemma}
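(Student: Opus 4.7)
The plan is to decompose $B(t,i)$ into two fluctuation terms, apply Bernstein's inequality to each, and then combine with a union bound over a time grid and over the $n$ indices. Set $\mu := \sigma_d\ep^d n/(2\pi)^d$ (the common mean of $N_i$) and $g_{t,i}(y) := \sin(u^I(t,y)-u^I(t,x_i))K(\ep^{-1}(y-x_i))$. Since $\frac{1}{\sigma_d\ep^{d+2}}\int_{\T^d}g_{t,i}(y)\,dy = \frac{n\,\E g_{t,i}(X)}{\ep^2\mu}$ for $X$ uniform on $\T^d$, we may write
\begin{align*}
B(t,i) = \underbrace{\left(\frac{1}{\ep^2 N_i}-\frac{1}{\ep^2\mu}\right)\sum_{j=1}^n g_{t,i}(x_j)}_{=:\,B_1(t,i)} + \underbrace{\frac{1}{\ep^2\mu}\left(\sum_{j=1}^n g_{t,i}(x_j)-n\,\E g_{t,i}\right)}_{=:\,B_2(t,i)},
\end{align*}
thereby isolating the randomness of the normalization from the concentration of the sum around its mean.

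For $B_1$, the Lipschitz bound of Lemma \ref{lem:lip} together with the support of $K$ gives $|g_{t,i}(x_j)|\le C_T\kappa\,\ep\,\mathbf{1}[j\in\cN(i)]$, hence $|\sum_j g_{t,i}(x_j)|\le C_T\kappa\,\ep\,N_i$. Provided $N_i\ge 1$, the $N_i$ cancels and $|B_1(t,i)|\le C_T\kappa\,|N_i-\mu|/(\mu\ep)$. Applying \eqref{bernstein} with $\lambda=c\delta\ep\mu$ (and using $\mu\asymp\ep^d n$) yields $\P(|B_1(t,i)|>\delta/3)\le 2e^{-C'\delta^2\ep^{d+2}n}$. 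A complementary application with $\lambda=\mu/2$ takes care of the degenerate event $\{N_i<\mu/2\}$ uniformly in $i$ at cost $2n\,e^{-C'\ep^d n}$, producing the second term in the claimed bound.

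For $B_2$, condition on $x_i$: the variables $\{g_{t,i}(x_j)\}_{j\neq i}$ are i.i.d., bounded by $\kappa$, and satisfy
\begin{align*}
\E[g_{t,i}(x_j)^2]\le\frac{1}{(2\pi)^d}\int_{\T^d}\sin^2(u^I(t,y)-u^I(t,x_i))\,K^2(\ep^{-1}(y-x_i))\,dy\le C\ep^{d+2},
\end{align*}
where we used $|\sin(u^I(t,y)-u^I(t,x_i))|\le C_T\ep$ on the support of $K$ (again by Lemma \ref{lem:lip}), together with $K^2\le\kappa K$. Bernstein's inequality with $\lambda = c\delta\ep^{d+2}n$ then gives $\P(|B_2(t,i)|>\delta/3)\le 2\,e^{-C'\delta^2\ep^{d+2}n}$, producing the first term in the claimed bound. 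The crucial point here is that the spatial Lipschitz control upgrades the naive variance bound $\sim\ep^d$ to $\sim\ep^{d+2}$, which is exactly what is needed for the sharp exponent.

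The remaining step is to upgrade these pointwise-in-$t$ estimates to a supremum over $[0,T]$. Differentiating the integral equation and applying Lemma \ref{lem:lip} with $|\sin z|\le|z|$ gives $\|\partial_t u^I(t,\cdot)\|_\infty = O(\ep^{-1})$, whence each of the two terms defining $B(t,i)$ is $O(\ep^{-3})$-Lipschitz in $t$. Choosing a time grid in $[0,T]$ of mesh $\lesssim\delta\ep^3$ (hence of size $m=O(T/(\delta\ep^3))$), the supremum in $t$ is controlled by the maximum over grid points up to an error of $\delta/3$, and a union bound over the $nm$ (index, time) pairs concludes. Under Condition \ref{assump-eps} the factor $nm$ is easily absorbed into $Cn^3$ (and in the regimes where it is not, the claimed bound exceeds $1$ and is trivial). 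The main obstacle is this uniform-in-$t$ argument: Lemma \ref{lem:lip} is the single ingredient that delivers both the $\ep^{d+2}$ variance producing the sharp exponent, and the $O(\ep^{-1})$ bound on $\partial_t u^I$ needed to discretize in time without destroying the prefactor.
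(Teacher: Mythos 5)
Your proof is correct, and it takes a genuinely different route from the paper's. The paper conditions on $x_i$ \emph{and} the neighbor set $\cN(i)$: given these, the rescaled points $z^i_j=(x_j-x_i)/\ep$, $j\in\cN(i)$, are i.i.d.\ uniform on $\B(0,1)$, so the term $B(t,i)$ becomes exactly $\ep^{-2}$ times the deviation of the conditional empirical average $\tfrac{1}{N_i}\sum_{j\in\cN(i)}\xi^i_j(t)$ from its conditional mean, and one application of Hoeffding's inequality (with the bound $|\xi^i_j|\le C_T\ep$ from Lemma~\ref{lem:lip}) plus the Bernstein estimate \eqref{bernstein} to control $N_i$ finishes the job. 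You instead split $B=B_1+B_2$ by introducing the deterministic reference $\mu=\sigma_d\ep^dn/(2\pi)^d$ and treating separately the fluctuation of the normalization ($B_1$, controlled directly by \eqref{bernstein}) and the concentration of the unnormalized sum around its unconditional mean ($B_2$, controlled by Bernstein's inequality). Both routes obtain the sharp exponent $\ep^{d+2}n\delta^2$ but for slightly different reasons: the paper's conditioning shrinks the number of summands to $N_i\asymp\ep^dn$ so that Hoeffding's bound $e^{-c\delta^2\ep^2 N_i}$ already has the right size; your argument keeps all $n$ summands but upgrades the variance from the crude $\ep^d$ (what Hoeffding alone would give, losing a factor $\ep^d$ in the exponent) to $\ep^{d+2}$ via the Lipschitz estimate, and this extra $\ep^2$ is precisely what lets Bernstein recover the sharp rate. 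In both cases Lemma~\ref{lem:lip} is the single crucial input, both for the amplitude/variance control and for the $O(\ep^{-1})$ bound on $\partial_t u^I$ that permits the time discretization. The time grid choices also differ (the paper uses a fixed mesh $\ep^4$ and absorbs the resulting $C\ep$ error into $\delta$ by requiring $\ep$ small; you use a $\delta$-dependent mesh $\delta\ep^3$), but both absorb the grid-size factor into $Cn^3$ under Condition~\ref{assump-eps}. Two small cleanups: in your $B_2$ paragraph you write that $g_{t,i}(x_j)$ is ``bounded by $\kappa$'', but to get the quoted $\delta^2$ in the exponent uniformly over $\delta$ you should use the sharper bound $C_T\kappa\ep$ (which you already stated earlier); and the centering $n\E g_{t,i}$ versus $(n-1)\E g_{t,i}$ leaves a residual $|\E g_{t,i}|/(\ep^2\mu)=O(1/(\ep n))$ which you should note is negligible under Condition~\ref{assump-eps}.
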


\begin{proof}
\red{Recall that we are considering the neighbors on the extended point cloud $\tilde V$.}
Conditional on $x_i\in V$ and the set of indices $\cN(i)$ of neighboring points, all the $x_j$, $j\in\cN(i)$ are i.i.d.~uniformly distributed in $\B(x_i,\ep)$, the Euclidean ball of radius $\ep$ centered around $x_i$. Indeed, knowing that $j\in\cN(i)$ reveals no more information other than $x_j\in \B(x_i,\ep)$. The conditional density of such an $x_j$ equals $|B(x_i,\ep)|^{-1}=(\sigma_d\ep^d)^{-1}$.
For every fixed $t$ and conditional on $x_i$ and $\cN(i)$, the random variables
\begin{align}\label{def-xi}
\xi_j^i(t)&:=\sin\left(u^{I,\ep}(x_j,t)-u^{I,\ep}(x_i,t)\right)K\left(\ep^{-1}(x_j-x_i)\right), \quad j\in\cN(i) 
\end{align}
are i.i.d. By the mean value theorem and Lemma \ref{lem:lip}, their absolute values are a.s. bounded by
\begin{align}\label{xi}
|\xi_j^i(t)|\le |u^{I,\ep}(x_j,t)-u^{I,\ep}(x_i,t)|K\left(\ep^{-1}(x_j-x_i)\right)|\le C_T\ep\|K\|_{L^\infty},
\end{align}
where we used that $|\sin x|\le |x|$ and $K$ has unit-size support. We have that
\begin{align*}
\E\left[\xi_j^i(t)\, |\, x_i, \cN(i)\right] =\frac{1}{\sigma_d\ep^d}\int_{\R^d} \sin\left(u^{I,\ep}(y,t)-u^{I,\ep}(x_i,t)\right)K\left(\ep^{-1}(y-x_i)\right) dy.
\end{align*}
Recall Hoeffding's concentration inequality cf.~\cite[Theorem 2.8]{boucheron}: {\it{Let $Y_1,...,Y_n$ be independent random variables such that $a_i\le Y_i\le b_i$. Let $S_n=\sum_{i=1}^nY_i$ and $\delta>0$, then we have that
\begin{align*}
\P\left(|S_n-\E(S_n)|>\delta\right)\le 2e^{-\frac{2\delta^2}{\sum_{i=1}^n(b_i-a_i)^2}}.
\end{align*}}}
We can apply this inequality to the conditionally independent variables $\xi_j^i(t)$, $j\in\cN_i$ (given $x_i, \cN_i$), and for any fixed $t\in[0,T]$, $\delta>0$, we have that

\begin{align}\label{bound-fixed-t}
&\P\Big(\sup_{x_i\in V}\Big|\frac{1}{\ep^2N_i}\sum_{j\in\cN(i)}\xi_j^i(t)-\frac{1}{\sigma_d \ep^{d+2}}\int_{\R^d}\sin\left((u^{I,\ep}(t,y)-u^{I,\ep}(t,x_i)\right)K\left({\ep}^{-1}{(y-x_i)}\right)dy\Big|\ge \delta\Big)\nonumber\\
&\le\sum_{i=1}^n\P\Big(\Big|\frac{1}{\ep^2N_i}\sum_{j\in\cN(i)}\xi_j^i(t)-\frac{1}{\ep^2}\E\left(\xi_j^i(t)\,|\, x_i, \cN(i)\right)\Big|\ge \delta\Big)\nonumber\\
&=\sum_{i=1}^n\E\Big[\P\Big(\Big|\sum_{j\in\cN(i)}\xi_j^i(t)-N_i\E\left(\xi_j^i(t)\,|\, x_i, \cN(i)\right)\Big|\ge \ep^2N_i\delta\, \Big|\, x_i, \cN(i)\Big)\Big] \nonumber\\
&\le 2\sum_{i=1}^n \E\Big[ e^{-\frac{2(\ep^2N_i\delta)^2}{C_TN_i\ep^2\|K\|_{L^\infty}^2}}\Big] =2\sum_{i=1}^n \E \left[e^{-C'\ep^2 N_i \delta^2}\right] \nonumber\\
&\le Cne^{-C'\ep^{d+2} n \delta^2}+Cne^{-C'\ep^dn},
\end{align}
where in the third step, we used the law of iterated expectation, and the last step is due to our control on $N_i$, namely \eqref{bernstein} applied with $\lambda=\frac{\sigma_d\ep^dn}{2(2\pi)^d}$, and $C,C'$ are finite constants depending only on $C_T,  \|K\|_{L^\infty}$ that in the sequel may change from line to line. Clearly, the regime of $\ep$ in Condition \ref{assump-eps} renders the last expression summable in $n$, for every fixed $\delta>0$.

We want to move a step further and obtain uniformity in $t\in[0,T]$. We first observe that by \eqref{eq:int}, the time derivative of $u^{I,\ep}$ admits a crude bound of $C\ep^{-1}$. Indeed, by Lemma \ref{lem:lip} and $|\sin x|\le |x|$, for any $x\in\Lambda_d$,
\begin{align*}
\left|\displaystyle{\frac{d}{dt}}u^{I,\ep}(t,x)\right| & \le \displaystyle{\frac{1}{\sigma_d\ep^{d+2}}}\int_{\R^d}\left|\sin\left(u^{I,\ep}(t,y)-u^{I,\ep}(t,x)\right)\right|K\left(\ep^{-1}(y-x)\right)dy\\
&\le \displaystyle{\frac{1}{\sigma_d\ep^{d+2}}}\int_{\R^d}\left|u^{I,\ep}(t,y)-u^{I,\ep}(t,x)\right|K\left(\ep^{-1}(y-x)\right)dy\\
&\le  \displaystyle{\frac{1}{\sigma_d\ep^{d+2}}}\int_{\R^d}C_T\left|y-x\right|K\left(\ep^{-1}(y-x)\right)dy\le C_T\kappa_1\ep^{-1},
\end{align*}
where $\kappa_1$ is as defined in \eqref{kappa-2} and a change of variable is used in the last step.
Now we divide the time interval $[0, T]$ into $M:=\lceil T\ep^{-4}\rceil$ sub-intervals of length at most $\ep^4$, i.e.~$0=t_0<t_1<...<t_M=T$ such that $t_{k+1}-t_k\le \ep^4$, $k=0,1,...,M$. For any \red{$x\in\Lambda_d$}, $k$ and $t\in[t_k, t_{k+1}]$, we have that 
\[
|u^{I,\ep}(t,x)-u^{I,\ep}(t_{k},x)|\le C_T\ep^{-1}|t-t_k|\le C\ep^3.
\]
Hence, by the $1$-Lipschitz property of sine function, a.s. for every $t\in[t_k,t_{k+1}]$ we have that 
\begin{align*}
&\sup_{k=1}^M\sup_{x_i\in V}\left|\frac{1}{N_i}\sum_{j\in\cN(i)}\sin\left(u^{I,\ep}(t,x_j)-u^{I,\ep}(t,x_i)\right)-\frac{1}{N_i}\sum_{j\in\cN(i)}\sin\left(u^{I,\ep}(t_{k},x_j)-u^{I,\ep}(t_{k},x_i)\right)\right|\\
&\le \sup_{k=1}^M\sup_{x_i\in V}\sup_{j\in\cN(i)}\left|\sin\left(u^{I,\ep}(t,x_j)-u^{I,\ep}(t,x_i)\right)-\sin\left(u^{I,\ep}(t_k,x_j)-u^{I,\ep}(t_{k},x_i)\right)\right|\\
&\le \red{\sup_{k=1}^M\sup_{x_i\in V}\sup_{j\in\cN(i)}\left|\sin\left(u^{I,\ep}(t,[x_j])-u^{I,\ep}(t,x_i)\right)-\sin\left(u^{I,\ep}(t_k,[x_j])-u^{I,\ep}(t_{k},x_i)\right)\right|}\\
&\le\red{ \sup_{k=1}^M\sup_{x_i\in V}\sup_{j\in\cN(i)}|u^{I,\ep}(t,[x_j])-u^{I,\ep}(t_{k},[x_j])|+
|u^{I,\ep}(t,x_i)-u^{I,\ep}(t_{k},x_i)|}\\
&\le C\ep^3,
\end{align*}
\red{where the second inequality is due to pseudo-peridocity of $u^{I,\ep}(t,\cdot)$ and $2\pi$-periodicity of sine.}
Similarly, a.s. for every $t\in[t_k, t_{k+1}]$ we also have that 
\begin{align*}
&\sup_{k=1}^M\sup_{x_i\in V}\left|\displaystyle{\frac{1}{\sigma_d\ep^d}}\int_{\R^d}\left|\sin\left(u^{I,\ep}(t,y)-u^{I,\ep}(t,x_i)\right)\right|K\left(\ep^{-1}(y-x)\right)dy \right. \\
&\qquad\quad\quad\quad \left.-\displaystyle{\frac{1}{\sigma_d\ep^d}}\int_{\R^d}\left|\sin\left(u^{I,\ep}(t_k,y)-u^{I,\ep}(t_k,x_i)\right)\right|K\left(\ep^{-1}(y-x_i)\right)dy \right|\\
&\le \|K\|_{L^\infty(V)}\sup_{k=1}^M\sup_{x_i\in V}\sup_{y: |y-x_i|\le\ep}\left|\sin\left(u^{I,\ep}(t,y)-u^{I,\ep}(t,x_i)\right)-\sin\left(u^{I,\ep}(t_{k},y)-u^{I,\ep}(t_{k},x_i)\right)\right|\\
&\le \red{\|K\|_{L^\infty(V)}\sup_{k=1}^M\sup_{x_i\in V}\sup_{y: |y-x_i|\le\ep}\left|\sin\left(u^{I,\ep}(t,[y])-u^{I,\ep}(t,x_i)\right)-\sin\left(u^{I,\ep}(t_{k},[y])-u^{I,\ep}(t_{k},x_i)\right)\right|}\\
&\le \red{\|K\|_{L^\infty(V)}\sup_{k=1}^M\sup_{x_i\in V}\sup_{y: |y-x_i|\le\ep}|u^{I,\ep}(t,[y])-u^{I,\ep}(t_k,[y])|+|u^{I,\ep}(t,x_i)-u^{I,\ep}(t_k,x_i)|}\\
&\le C\ep^3.
\end{align*}
Now, for every $t\in [0,T]$, there exists a unique $k\in\{0,1,..,M\}$ such that $t\in[t_k,t_{k+1}]$, and by the triangle inequality and the preceding two estimates
\begin{align*}
&\Big|\frac{1}{\ep^2N_i}\sum_{j\in\cN(i)}\xi_j^i(t)- \frac{1}{\sigma_d\ep^{d+2}}\int_{\red{\mathbb{R}^d}} \sin\left(u^{I,\ep}(t,y)-u^{I,\ep}(t,x_i)\right)K\left(\ep^{-1}(y-x)\right)dy \Big|\\
& \le\Big|\frac{1}{\ep^2N_i}\sum_{j\in\cN(i)}\xi_j^i(t_k) -\frac{1}{\sigma_d\ep^{d+2}}\int_{\red{\mathbb{R}^d}} \sin\left(u^{I,\ep}(t_k,y)-u^{I,\ep}(t_k,x_i)\right)K\left(\ep^{-1}(y-x)\right)dy \Big| +C\ep,
\end{align*}
where we used the shorthand notation \eqref{def-xi} of $\xi_j^i$. 
Hence, we can revise our previous bound \eqref{bound-fixed-t} as follows: given any $\delta>0$ and every $x_i\in V$,
\begin{align*}
&\P\Big(\sup_{t\in[0,T]}\Big|\frac{1}{\ep^2N_i}\sum_{j\in\cN(i)}\xi_j^i(t)-\frac{1}{\sigma_d\ep^{d+2}}\int_{\red{\mathbb{R}^d}} \sin\left(u^{I,\ep}(t,y)-u^{I,\ep}(t,x_i)\right)K\left(\ep^{-1}(y-x)\right)dy \Big|\ge \delta\Big)\\
&\le \P\Big(\exists k_*=1,...,M \text{ s.t. }\Big|\frac{1}{\ep^2N_i}\sum_{j\in\cN(i)}\xi_j^i(t_{k_*})\\
&\quad\quad\quad\quad\quad\quad\quad-\frac{1}{\sigma_d\ep^{d+2}}\int_{\red{\mathbb{R}^d}} \sin\left(u^{I,\ep}(t_{k_*},y)-u^{I,\ep}(t_{k_*},x_i)\right)K\left(\ep^{-1}(y-x)\right)dy \Big|\ge \delta-C\ep\Big)\\
&\le \sum_{k=1}^M\P\Big(\sup_{t\in[0,T]}\Big|\frac{1}{\ep^2N_i}\sum_{j\in\cN(i)}\xi_j^i(t_k)\\
&\quad\quad\quad\quad\quad\quad\quad-\frac{1}{\sigma_d\ep^{d+2}}\int_{\mathbb{R}^d} \sin\left(u^{I,\ep}(t_k,y)-u^{I,\ep}(t_k,x_i)\right)K\left(\ep^{-1}(y-x)\right)dy \Big|\ge \delta-C\ep\Big).
\end{align*}
Note that for $\ep$ small enough, $\delta-C\ep>\delta/2$, and by Condition \ref{assump-eps},
\[
M=\lceil T \ep^{-4}\rceil\le Tn^{\frac{4}{d+2}},
\]
applying our previous bound \eqref{bound-fixed-t} for fixed $t=t_k$, $k=1,2,...,M$ here, we deduce that 
\begin{align*}
&\P\Big(\sup_{x_i\in V}\sup_{t\in[0,T]}\Big|\frac{1}{\ep^2N_i}\sum_{j\in\cN(i)}\xi_j^i(t) -\frac{1}{\sigma_d\ep^{d+2}}\int_{\red{\mathbb{R}^d}} \sin\left(u^{I,\ep}(y,t)-u^{I,\ep}(x_i,t)\right)K\left(\ep^{-1}(y-x)\right)dy \Big|\ge \delta\Big)\\
&\le Cnn^{\frac{4}{d+2}}\big(e^{-C'\ep^{d+2} n \delta^2}+e^{-C'\ep^dn}\big).
\end{align*}
Note that under Condition \ref{assump-eps}, $e^{-C'\ep^{d+2} n \delta^2}$ and $e^{-C'\ep^dn}$ decay faster than any polynomial in $n$, hence the preceding display is still summable in $n$, for every fixed $\delta$. This completes the proof of Lemma \ref{lem:tail-bd}.
\end{proof}

\begin{proof}[Proof of Proposition \ref{ppn:kur-to-int}]
Recall the term $A(t,i)$ in \eqref{triangle} and that $e_n(t,x):=u^{n}(t,x)-u^{I,\ep}(t,x)$ for $x\in\tilde V$. 
By the mean value theorem, for any $x_j\in\tilde V$ neighbor of $x_i\in V$, we have that 
\begin{align*}
&\sin \left(u^{n}(t,x_j)-u^{n}(t,x_i)\right)-\sin \left(u^{I,\ep}(t,x_j)-u^{I,\ep}(t,x_i)\right)\\
&=\cos \left[\beta\left(u^{n}(t,x_j)-u^{n}(t,x_i)\right)+(1-\beta)\left(u^{I,\ep}(t,x_j)-u^{I,\ep}(t,x_i)\right)\right]\left(e_n(t,x_j)-e_n(t,x_i)\right)
\end{align*}
for some $\beta\in(0,1)$. Let us denote
\begin{align*}
\Xi(x_i,x_j, t)&:=\cos\left(\beta\left(u^{n}(t,x_j)-u^{n}(t,x_i)\right)+(1-\beta)\left(u^{I,\ep}(t,x_j)-u^{I,\ep}(t,x_i)\right)\right)\\
&=\cos\left(\beta\left(e_n(t,x_j)-e_n(t,x_i)\right)+\left(u^{I,\ep}(t,x_j)-u^{I,\ep}(t,x_i)\right)\right),
\end{align*}
where the second line is just a rearrangement of terms.
Define the random time
\begin{align*}
\tau_n:=\inf\Big\{t\ge0: \sup_{x_i\in V}\left|e_n\left(t,x_i\right)\right|\ge\frac{\pi}{16}\Big\}.
\end{align*}
By Lemma \ref{lem:lip}, for such $x_i,x_j$ as above, we have $|u^{I,\ep}(t,x_j)-u^{I,\ep}(t,x_i)|\le C_T|x_j-x_i|\le C_T\ep$. Taking $\ep$ small enough, 
for any $\beta\in(0,1)$ we have that 
\begin{align*}
&\sup_{x_i\in V}\sup_{j\in\cN(i)}\sup_{t\in[0,\tau_n]}\left|\beta\left(e_n(t,x_j)-e_n(t,x_i)\right)+\left(u^{I,\ep}(t,x_j)-u^{I,\ep}(t,x_i)\right)\right|\le2\times\frac{\pi}{16}+C_T\ep\le \frac{\pi}{4},
\end{align*}
and hence we have
\begin{align}\label{pos-coeff}
&\inf_{x_i\in V}\inf_{j\in\cN(i)}\inf_{t\in[0,\tau_n]}\Xi(x_i,x_j, t)\ge \frac{1}{\sqrt{2}}.
\end{align}
The whole expression in \eqref{triangle} for $t\in[0,T]$ can be written and bounded as (where $x_i\in V$)
\begin{align}\label{div-form}
&\frac{d}{dt}e_n(t,x_i)=A(t,i)+B(t,i)\nonumber\\
&< \frac{1}{\ep^{2}N_i}\sum_{x_j\in\tilde V}\Xi(x_i,x_j, t)K\left(\ep^{-1}(x_j-x_i)\right)\left(e_n(t,x_j)-e_n(t,x_i)\right)+\sup_{x_i\in V}\sup_{s\in[0,T]} |B(s,i)|+\ep,
\end{align}
and we have added an $\ep$ to maintain a strict inequality. We want to apply the maximum principle Lemma \ref{max-disc}.
Denote 
\[
\zeta_{n}(t):=t\sup_{x_i\in V}\sup_{s\in[0,T]} |B(s,i)| +\ep t+ \|u^n_0 -\tilde  u_0\|_{L^\infty(V)},
\]
where $u^n_0$ is the initial condition for the Kuramoto equation, and $\tilde u_0$ the initial condition for the integral equation. \red{We already remarked that both $e_n, \zeta_n$ are $2\pi$-periodic in spatial variable.}


By Lemma \ref{max-disc} applied to \eqref{div-form}, taking there 
\[
u(t,x)=\zeta_n(t), \quad v(t,x)=e_n(t,x), \quad x\in \tilde V,
\]
with $u(0)\ge v(0)$ holding, we conclude that for any $t\in[0,\tau_n\wedge T]$,
\begin{align*}
\sup_{x_i\in V}e_n(t,x_i)\le  \zeta_{n}(T).
\end{align*}
Analogous arguments applied to $-e_n(t,x)$, $t\in[0,\tau_n\wedge T]$ yields $\sup_{x_i\in V}-e_n(t,x_i)\le  \zeta_{n}(T)
$, hence taken together
\begin{align}\label{disc-max}
\sup_{x_i\in V}|e_n(t,x_i)|\le \zeta_{n}(T), \quad t\in[0,\tau_n\wedge T].
\end{align}
By Lemma \ref{lem:tail-bd} and \eqref{disc-max}, we have that for any $\delta>0$,
\begin{align}\label{tail-en}
\P\Big(\sup_{t\in[0,T\wedge \tau_n]}\sup_{x_i\in V}| & e_n(t,x_i)|>\delta\Big) \nonumber\\
& \le \P\Big(T\sup_{x_i\in V}\sup_{t\in[0,T]} |B(t,i)| >\delta/2\Big) + \P\left(\|u^n_0 - u_0\|_{L^\infty(V)} +\ep T > \delta/2 \right)\nonumber\\
& \le Cn^{3}\left(e^{-C'\ep^{d+2} n \delta^2}+e^{-C'\ep^dn}\right) + \P\left(\|u^n_0 - u_0\|_{L^\infty(V)} > \delta/4 \right),
\end{align}
upon considering $\ep$ small enough. Further, considering only $\delta<\pi/16$, we have that
\begin{align*}
\P(\tau_n\le T)\le \P\Big(\sup_{t\in[0,T\wedge \tau_n]}\sup_{x_i\in V}|e_n(t,x_i)|\ge\pi/16\Big)\le \P\Big(\sup_{t\in[0,T\wedge \tau_n]}\sup_{x_i\in V}|e_n(t,x_i)|>\delta\Big).
\end{align*}
Hence by \eqref{tail-en}, for such $\delta$, 
\begin{align}\label{summable}
&\P\Big(\sup_{t\in[0,T]}\sup_{x_i\in V}|e_n(t,x_i)|>\delta\Big)\nonumber\\
&=\P\Big(\sup_{t\in[0,T\wedge\tau_n]}\sup_{x_i\in V}|e_n(t,x_i)|>\delta;\, T<\tau_n\Big)+\P\Big(\sup_{t\in[0,T]}\sup_{x_i\in V}|e_n(t,x_i)|>\delta;\, \tau_n\le T\Big)\nonumber\\
&\le \P\Big(\sup_{t\in[0,T\wedge \tau_n]}\sup_{x_i\in V}|e_n(t,x_i)|>\delta\Big)+\P(\tau_n\le T)\nonumber\\
&\le Cn^{3}\left(e^{-C'\ep^{d+2} n \delta^2}+e^{-C'\ep^dn}\right) + 2\P\left(\|u^n_0 - u_0\|_{L^\infty(V)} > \delta/4 \right).
\end{align}
Since the last expression is summable in $n$ for every fixed $\delta$, by the Borel--Cantelli lemma, 
\[
\P\Big(\sup_{t\in[0,T]}\sup_{x_i\in V}|e_n(t,x_i)|>\delta, \, \text{i.o.}\Big)=0,
\]
where i.o. means infinitely often in $n$. 
Since the events $\mathsf A_\delta:=\big\{\sup_{t\in[0,T]}\sup_{x_i\in V}|e_n(t,x_i)|>\delta, \,  \text{i.o.}\big\}$ are nested in $\delta$ and non-decreasing as $\delta\downarrow 0$, we conclude that 
\[
\P\Big(\limsup_{n\to\infty}\sup_{t\in[0,T]}\sup_{x_i\in V}|e_n(t,x_i)|>0\Big) \le \P(\cup_{\delta>0}\mathsf A_\delta)
= \lim_{\delta\downarrow 0}\P(\mathsf A_\delta)=0.
\]
This completes the proof of Proposition \ref{ppn:kur-to-int} and Theorem \ref{thm.main}.
\end{proof}

\section{Simulations}
\label{sec.simulations}

We illustrate our results with some simulations with different initial conditions. We sample $n=2000$ independent uniform points in $\T^2$ and we construct the random geometric graph with $\ep_n=.25$. In Figure \ref{planar} we show the results representing the points in  $[0,2\pi]^2$ and in Figure \ref{3d} we show them embedded in $\R^3$. In both cases we consider solutions of \eqref{eq:km} with initial conditions $u_0$ as displayed in the leftmost column of both Figure \ref{planar} and Figure \ref{3d}. Observe that all of them represent stable equilibria for the heat equation \eqref{eq:heat}. In view of this fact and  Theorem \ref{thm.main} we expect the solutions of the Kuramoto model \eqref{eq:km} to remain close to these initial conditions at least in finite time intervals (and arguably for all times). In each figure, each row represents a different initial condition. Different columns represent different moments in time. The first column always represents $t=0$. The second and third columns show snapshots at moderate time $t=5,11,14$. We ran the simulations up to time 100 and no change can be appreciated. In all the cases the simulations indicate that the situation is stable at least in this range of times. To improve visualization and to emphasize the twisted states we show $u^n(\cdot, t) + t$ instead of $u^n(\cdot, t)$ (which corresponds to $\omega_i=1$ for all $i$ in \eqref{kuramoto.graph}). Figure $\ref{palette}$ shows the color representation of each phase $\theta \in \mathbb S^1$. Videos of these solutions in motion and the code used to generate them can be found in \url{https://github.com/FranCire/TorusKuramoto}. \pagebreak


\vspace*{\fill} 
\begin{center}
\begin{minipage}{0.8\linewidth}
  \centering
  \small
  \begin{tabular}{p{0.15\linewidth}|ccc}
    Initial\\condition & & Planar Representation &\\
    \hline
    $u_0 = x\cdot e_1$ &
    \parbox[c]{0.24\linewidth}{\includegraphics[width=\linewidth]{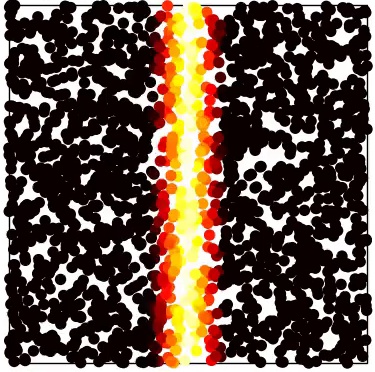}} &
    \parbox[c]{0.24\linewidth}{\includegraphics[width=\linewidth]{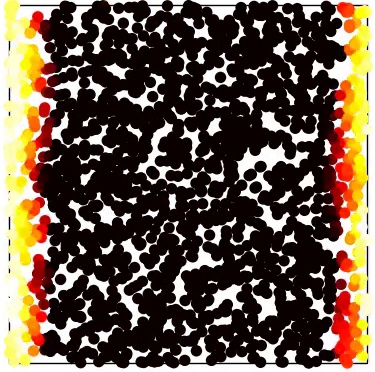}} &
    \parbox[c]{0.24\linewidth}{\includegraphics[width=\linewidth]{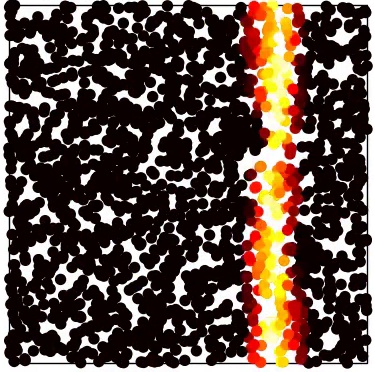}} \\
    & $t=0$ & $t=5$ & $t=11$ \\
    \hline
    $u_0 = x\cdot e_2$ &
    \parbox[c]{0.24\linewidth}{\includegraphics[width=\linewidth]{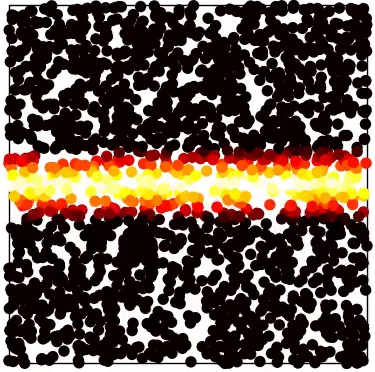}} &
    \parbox[c]{0.24\linewidth}{\includegraphics[width=\linewidth]{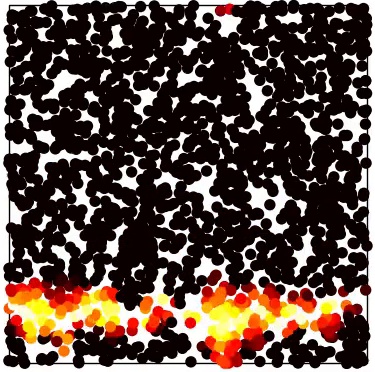}} &
    \parbox[c]{0.24\linewidth}{\includegraphics[width=\linewidth]{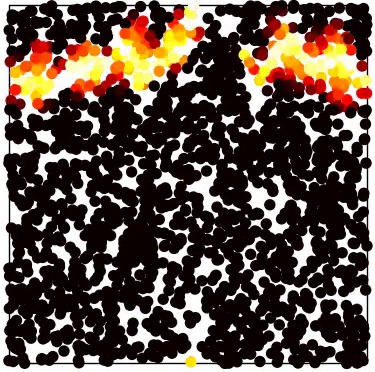}} \\
    & $t = 0$ & $t = 2$ & $t = 11$ \\
    \hline
    $u_0=2x\cdot e_1$ &
    \parbox[c]{0.24\linewidth}{\includegraphics[width=\linewidth]{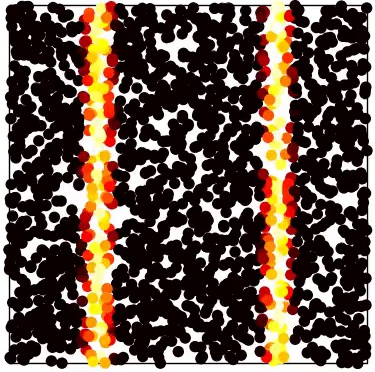}} &
    \parbox[c]{0.24\linewidth}{\includegraphics[width=\linewidth]{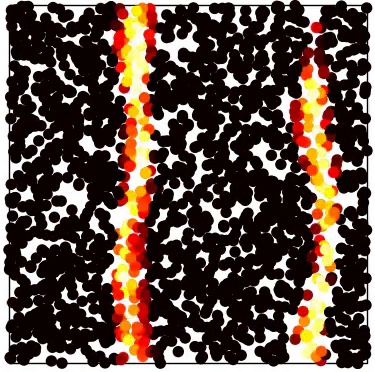}} &
    \parbox[c]{0.24\linewidth}{\includegraphics[width=\linewidth]{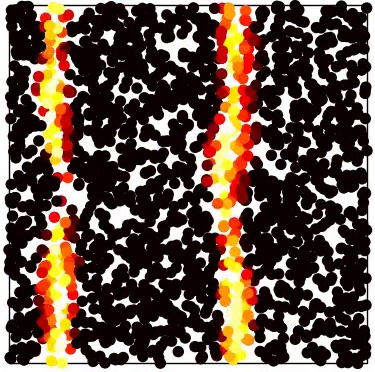}} \\
    & $t=0$ & $t=5$ & $t=14$ \\
    \hline
    $u_0=x\cdot(1,1)$ &
    \parbox[c]{0.24\linewidth}{\includegraphics[width=\linewidth]{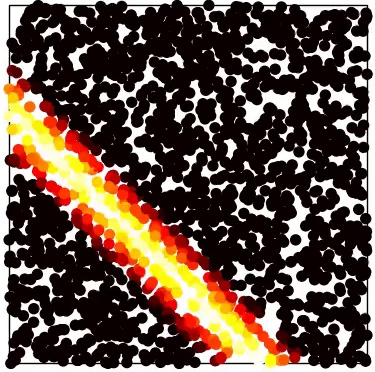}} &
    \parbox[c]{0.24\linewidth}{\includegraphics[width=\linewidth]{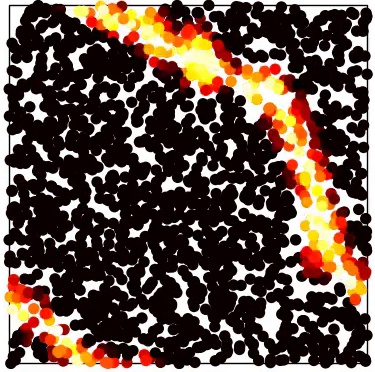}} &
    \parbox[c]{0.24\linewidth}{\includegraphics[width=\linewidth]{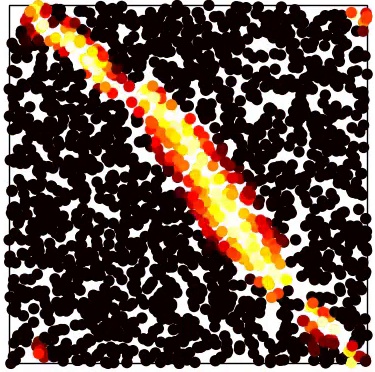}} \\
    & $t=0$ & $t=5$ & $t=11$ \\
  \end{tabular}
  \captionof{figure}{Planar Representation}
  \label{planar}
\end{minipage}
\end{center}
\vspace*{\fill} 

\newpage
  \vspace*{\fill} 
\begin{center}
  \begin{minipage}{0.8\linewidth}
  \centering
  \small
  \begin{tabular}{p{0.15\linewidth}|ccc}
  Initial\\condition & & 3D Representation &\\
  \hline
  $u_0 = x\cdot e_1$ &
  \parbox[c]{0.24\linewidth}{\includegraphics[width=\linewidth]{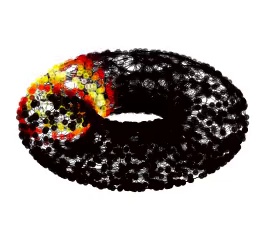}} &
  \parbox[c]{0.24\linewidth}{\includegraphics[width=\linewidth]{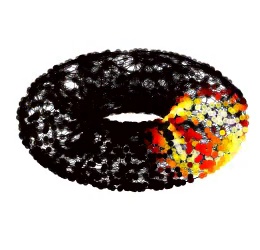}} &
  \parbox[c]{0.24\linewidth}{\includegraphics[width=\linewidth]{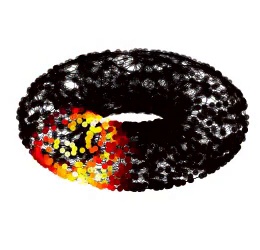}} \\
  & $t=0$ & $t=5$ & $t=11$ \\
  \hline\\
  $u_0 = x\cdot e_2$ &
   \parbox[c]{0.24\linewidth}{\includegraphics[width=\linewidth]{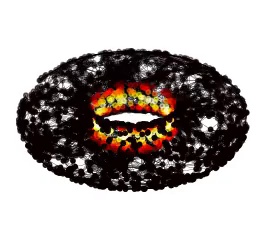}} &
  \parbox[c]{0.24\linewidth}{\includegraphics[width=\linewidth]{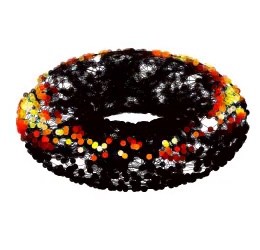}} &
  \parbox[c]{0.24\linewidth}{\includegraphics[width=\linewidth]{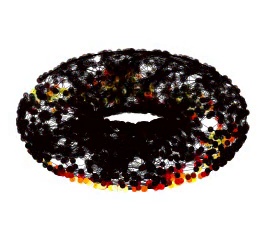}} \\
  & $t = 0$ & $t = 2$ & $t = 11$ \\
  \hline\\
  $u_0=2x\cdot e_1$ &
  \parbox[c]{0.24\linewidth}{\includegraphics[width=\linewidth]{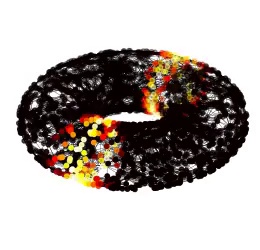}} &
  \parbox[c]{0.24\linewidth}{\includegraphics[width=\linewidth]{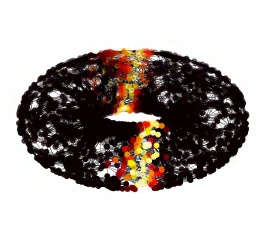}} &
  \parbox[c]{0.24\linewidth}{\includegraphics[width=\linewidth]{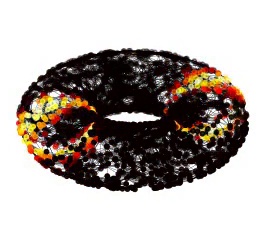}} \\
  & $t=0$ & $t=5$ & $t=14$ \\
  \hline\\
  $u_0=x\cdot(1,1)$ &
  \parbox[c]{0.24\linewidth}{\includegraphics[width=\linewidth]{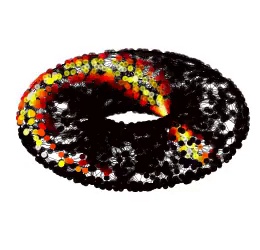}} &
  \parbox[c]{0.24\linewidth}{\includegraphics[width=\linewidth]{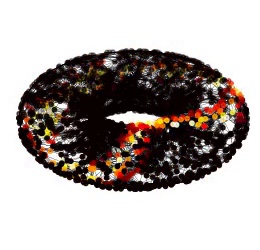}} &
  \parbox[c]{0.24\linewidth}{\includegraphics[width=\linewidth]{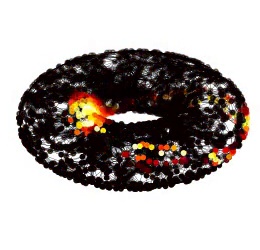}} \\
  & $t=0$ & $t=5$ & $t=11$ \\
  \end{tabular}
  \captionof{figure}{3D Representation}
  \label{3d}
  \end{minipage}
\begin{figure}
    \centering
    \includegraphics[width=.25\linewidth]{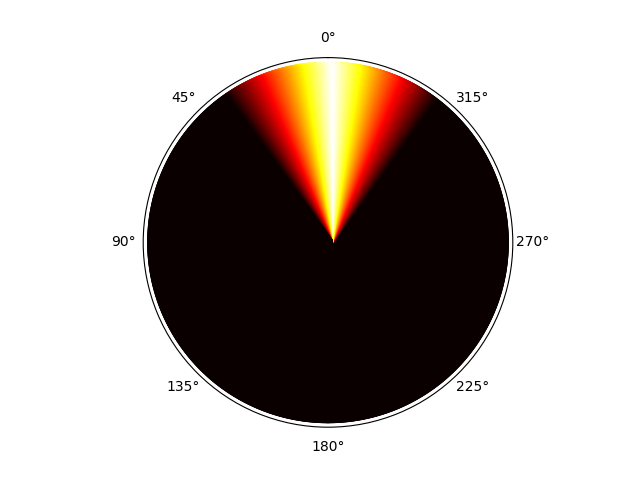}
    \caption{Color Palette}
    \label{palette}
\end{figure}
\end{center}

\vspace*{\fill} 

%
%
%
%
%

\newpage
{\textbf{Acknowledgments.}} 
\red{We thank the anonymous referees whose comments and criticisms have significantly improved our paper.}

PG and HV are partially supported by CONICET Grant PIP 2021 11220200102825CO, UBACyT Grant 20020190100293BA and PICT 2021-00113 from Agencia I+D. RH is funded by  the Deutsche Forschungsgemeinschaft (DFG, German Research Foundation) under Germany's Excellence Strategy EXC 2044-390685587, Mathematics M\"unster: Dynamics-Geometry-Structure.


\bibliographystyle{abbrv}
\bibliography{kuramotoSL}
\end{document}